\def\^{\widehat}
\newcommand{\norm}[1]{\Vert #1 \Vert}
\def\phi{\varphi}
\numberwithin{equation}{section}
\renewcommand{\phi}{\varphi}
\def\~{\widetilde}
\def\^{\widehat}
\newcommand{\ee}{{\rm e}\hspace{1pt}}
\newcommand{\dd}{\hspace{1pt}{\rm d}\hspace{0.5pt}}
\newcommand{\abs}[1]{\left| #1 \right|}
\newcommand{\RR}{\mathbb{R}}
\newcommand{\CC}{\mathbb{C}}
\newcommand{\NN}{\mathbb{N}}
\renewcommand{\vec}{\operatorname{vec}}
\newcommand{\err}{\operatorname{err}}
\newcommand{\errT}[1]{\operatorname{err}_{{\rm T},#1}}
\newcommand{\errK}[1]{\operatorname{err}_{{\rm K},#1}}
\newcommand{\errKt}[1]{\widetilde{\operatorname{err}}_{{\rm K},#1}}
\newcommand{\eps}{\varepsilon}
\newcommand{\veps}{\varepsilon}
\def\Ddots{\mathinner{\mkern1mu\raise\p@
\vbox{\kern7\p@\hbox{.}}\mkern2mu
\raise4\p@\hbox{.}\mkern2mu\raise7\p@\hbox{.}\mkern1mu}}
\newtheorem{thm}{Theorem}
\newtheorem{lem}[thm]{Lemma}
\newtheorem{cor}[thm]{Corollary}
\newcommand*\samethanks[1][\value{footnote}]{\footnotemark[#1]}
\title{
Krylov approximation of
ODEs with \\ polynomial parameterization
}
\author{Antti Koskela\thanks{Department of Mathematics, Royal Institute of Technology (KTH),
Stockholm, SeRC Swedish e-Science Research Center, \texttt{akoskela@kth.se}, \texttt{eliasj@kth.se}.
}
\and Elias Jarlebring\samethanks
\and Michiel E. Hochstenbach\thanks{Department of Mathematics and Computer Science,
TU Eindhoven,
PO Box 513,
5600 MB Eindhoven,
The Netherlands
\texttt{www.win.tue.nl/$\sim$hochsten/}.
This author was supported by an NWO Vidi research grant.}}
    \date{}
\begin{document}
   \maketitle
\begin{abstract}
We propose a new numerical method  to solve linear
ordinary differential equations of the type
$\frac{\partial u}{\partial t}(t,\veps) =
A(\veps) \, u(t,\veps)$,
where $A:\CC\rightarrow\CC^{n\times n}$ is a
matrix polynomial with large and sparse matrix coefficients.
The algorithm computes an explicit parameterization
of approximations of $u(t,\veps)$ such that approximations
for many different values of $\veps$ and $t$ can be obtained
with a very small additional computational effort.
The derivation of the algorithm
is based on a reformulation of
the parameterization as a linear parameter-free
ordinary differential equation and on approximating the product
of the matrix exponential and a vector with a Krylov method.
The Krylov approximation is generated with Arnoldi's method
and the structure of the coefficient matrix turns out to have an
independence on the truncation parameter so that
it can also be interpreted as Arnoldi's method applied to an infinite dimensional matrix.
We prove the superlinear convergence of the algorithm and provide a posteriori error
estimates to be used as termination criteria. The behavior of the algorithm
is illustrated with examples stemming from spatial discretizations
of partial differential equations.
\end{abstract}
\begin{keywords}
Krylov methods, Arnoldi's method, matrix functions,
matrix exponential,
exponential integrators,
parameterized ordinary differential equations,
Fréchet derivatives, 
model order reduction.
\end{keywords}

\begin{AMS}
65F10, 65F60, 65L20, 65M22
\end{AMS}

\begin{DOI}

\end{DOI}

\section{Introduction}

Let $A_0$, $A_1$, \dots, $A_N\in \mathbb{C}^{n \times n}$ be given matrices and consider the 
parameterized linear time-independent ordinary differential equation
\begin{equation} \label{eq:IVP}
\frac{\partial u}{\partial t}(t,\veps) =
A(\veps) \, u(t,\veps), \quad
u(0,\veps) = u_0,
\end{equation}
where $A$ is the matrix polynomial $A(\veps):=A_0+\veps A_1+\cdots+\veps^NA_N$.
Although most  of our results are general,
the usefulness of the approach is more explicit in
  a setting where $N$ is not very large and
the matrices $A_0$, \dots, $A_N$ are large and sparse, e.g.,
stemming from a spatial finite-element semi-discretization of a
parameterized partial-differential
equation of evolutionary type.

We present a new iterative algorithm for the parameterized ODE \eqref{eq:IVP},
which gives an explicit parameterization of the solution. This parameterization
is explicit in the sense that after executing the algorithm
we can find a solution to the ODE \eqref{eq:IVP}
for many different values of $\varepsilon$ and $t>0$
without essential additional computational effort.
Such explicit parameterizations of solutions
are useful in various settings, e.g., in parametric model order reduction and
in the field of uncertainty quantification (with a single model parameter);
see the discussion of model reduction below and the references in  \cite{Benner:2013:MOR}.



The parameterization of the solution is represented as follows.
Let the coefficients of the Taylor expansion of the solution with
respect to the parameter $\varepsilon$ be denoted
by $c_0(t)$, $c_1(t)$, \dots, i.e.,
\begin{equation} \label{eq:taylor_series}
u(t,\veps) =
\exp(t A(\veps)) \, u_0
= \sum\limits_{\ell=0}^\infty \eps^\ell c_\ell(t).
\end{equation}
As $\, \exp\left( t A(\veps) \right) \,$ is an entire function of a matrix polynomial,
the expansion \eqref{eq:taylor_series} exists for all $\eps \in \mathbb{C}$.


Consider the approximation stemming from the
truncation of the Taylor series \eqref{eq:taylor_series} and
a corresponding approximation of the Taylor coefficients
\begin{align}
u_k(t,\veps) &:= \sum\limits_{\ell=0}^{k-1} \eps^\ell c_\ell(t)\notag \\
&\approx \sum\limits_{\ell=0}^{k-1} \eps^\ell \widetilde{c}_\ell(t) =: \widetilde{u}_k(t,\veps).
\label{eq:approximative_series}
\end{align}
Our approach gives an explicit parameterization with respect $t$
of the approximate coefficients $\widetilde{c}_0(t)$,\dots,$\widetilde{c}_{k-1}(t)$
which, via \eqref{eq:approximative_series},
gives an approximate solution with an explicit parameterization
with respect to $\varepsilon$ and $t$.

The derivation of our approach is based on an explicit characterization of the time-dependent
coefficients $c_0(t)$,\dots,$c_{m-1}(t)$. We
prove in Section~\ref{sect:algder} that they are solutions
to the linear ordinary differential equation of size $nm$,
\begin{equation}\label{eq:larger_ODE}
\frac{\dd}{\dd t}\begin{bmatrix}
c_0(t)\\
\vdots\\
c_{m-1}(t)
\end{bmatrix}=L_m
\begin{bmatrix}
c_0(t)\\
\vdots\\
c_{m-1}(t)
\end{bmatrix},\qquad
\begin{bmatrix}
c_0(0)\\
\vdots\\
c_{m-1}(0)
\end{bmatrix}=
\begin{bmatrix}
u_0\\
0\\
\vdots\\
0
\end{bmatrix}.
\end{equation}
The matrix $L_m$ in \eqref{eq:larger_ODE}
is a finite-band block Toeplitz matrix and also
a lower block triangular matrix.

Since \eqref{eq:larger_ODE} is a standard linear ODE,
we can in principle
apply any numerical method to compute the solution which results
in approximate coefficients $\widetilde{c}_0(t)$,\dots,$\widetilde{c}_{m-1}(t)$.
Exponential integrators combined with Krylov approximation of matrix functions
have recently turned out to be an efficient class of methods for large-scale
(semi)linear ODEs arising from PDEs \cite{Hochbruck:1997:KRYLOV,Hochbruck:1998:EXPINT}.
See also \cite{Hochbruck:2010:EXPINT}
for a recent summary of exponential integrators.
Krylov approximations of matrix functions have a feature which is
suitable in our setting: after one run they give
parameterized approximations with respect to the time-parameter.

Our derivation is based on approximating the solution of \eqref{eq:larger_ODE},
i.e., a product of the matrix exponential and a vector, using a Krylov method.
This is done by exploiting the structure of the coefficient matrix $L_m$.
We show that when we apply Arnoldi's method to construct a Krylov
subspace corresponding to \eqref{eq:larger_ODE}, the
block Toeplitz and lower block triangular property of $L_m$
result in a particular structure in the basis matrix given by
Arnoldi's method.

The structure of $L_m$ is such that, in a certain sense, the algorithm
can be equivalently extended to infinity. For example when $N=1$,
the basis matrix is extended with one block row
as well as a a block column in every iteration.
This is analogous to the infinite Arnoldi method which has been
developed for nonlinear eigenvalue problems \cite{Jarlebring:2012:INFARNOLDI}
and linear inhomogeneous ODEs \cite{Koskela:2014:INFARNSEMILIN}.
This feature implies that the algorithm
does not require an a priori choice of the truncation parameter $m$.





We prove convergence of the algorithm (in Section~\ref{sect:apriori})
and also provide 
a termination criteria by giving a posteriori error estimates in Section~\ref{sect:aposteori}.

%
%

The results can be interpreted and related
to other approaches from a number of different
perspectives. From one viewpoint, our result is related to
recent work on computations and theory for Fréchet derivatives
of matrix functions, e.g.,
\cite{Higham:2014:HIGHFRECHET, Najfeld:1995:DERIVATIVES,Mathias:1996:CHAIN}.
As an illustration of a relation, consider the
special case $N=1$.
The first-order expansion of the matrix
exponential in  \eqref{eq:taylor_series}
and \cite[Chapter 3.1]{Higham:2008:MATFUN} gives
\begin{align*}
u(t,\varepsilon) & =\exp(t(A_0+\veps A_1)) \, u_0 \\[1mm]
& =\exp(tA_0) \, u_0+ L_{\exp}(tA_0,\veps tA_1) \, u_0+o(|\veps| \, |t| \, \|A_1\|),
\end{align*}
where $L_{\exp}$ is the Fréchet derivative of the matrix exponential.
Since the Fréchet derivative is linear in the second parameter,
the first coefficient is explicitly given by $c_1(t)=L_{\exp}(tA_0,tA_1) \, u_0$.
The higher order terms $c_2,c_3,\ldots$ have corresponding
relationships with the higher order Fréchet derivatives.
An analysis of higher order Fréchet derivatives is given in
\cite{Relton:2014:PHD}.
In contrast to the current Fréchet derivative approaches,
our approach is an iterative Krylov method with a
focus on large and sparse matrices and a specific starting vector,
which unfortunately does not appear to be easily constructed within the Fréchet derivative
framework.
%

%

The general approach to compute parameterized solutions
to parameterized problems is very common in the field
of model order reduction (MOR).
See the recent survey papers \cite{Benner:2013:MOR,Antoulas:2006:MOR}.
In the terminology of MOR, our approach can be
 interpreted as a time-domain model order reduction technique
for parameterized linear dynamical systems,
without input or output. Parametric
MOR is summarized in \cite{Benner:2013:MOR}; see also \cite{Lietaert:2015:INTERPOLATORY,Yue:2012:KRYLOVPADE}.
Our approach is a Krylov method to compute a moment matching
approximation in the model parameter $\veps$.
There are time-domain Krylov methods, e.g.,
those described in PhD thesis~\cite{Eid:2008:TIMEDOMAIN}.
To our knowledge, none of these methods  can be interpreted
as exponential integrators.
%

We use the following notation in this paper.
We let $\#S$ denote the number of elements
in the set $S$, and $\vec(B)$ denote vectorization, i.e.,
$\vec(B)=[b_1^T,\ldots,b_k^T]^T\in\CC^{nk}$, where $B=[b_1,\ldots,b_k]\in\CC^{n\times k}$.
By $I_n$ we indicate the identity matrix of dimension $n$.
The set of eigenvalues of a matrix $A$ is denoted by $\Lambda(A)$
and the positive integers by $\NN_+$.
The logarithmic norm (or numerical abscissa) $\mu:\CC^{n\times n}\rightarrow \RR$
is defined by
\begin{equation}  \label{eq:lognorm}
\mu(A) := \max \left\{ \lambda\in\RR \, : \, \lambda \in \Lambda \left(\frac{A+A^*}{2}\right) \right\}.
\end{equation}

\section{Derivation of the algorithm}\label{sect:algder}

\subsection{Representation of the coefficients using the matrix exponential}

To derive the algorithm, we first show
that the time-dependent coefficients $c_0(t)$, \dots, $c_{m-1}(t)$
are solutions to a linear time-independent ODE of the form \eqref{eq:larger_ODE}, i.e.,
they are explicitly given by the matrix exponential.


\begin{thm}[Explicit formula with matrix exponential] \label{thm:block_toeplitz}
The Taylor coefficients  $c_0(t), \ldots, c_{m-1}(t)$ in \eqref{eq:taylor_series}
are explicitly given by
\begin{equation}\label{eq:block_toeplitz}
\vec(c_0(t), \ldots, c_{m-1}(t))
= \exp(t L_m) \, \widetilde{u}_0,
\end{equation}
where
\begin{equation} \label{eq:L_N}
L_m  := \begin{bmatrix}
A_0    &        &        &        &        &      \\
A_1    & \ddots &        &        &        &      \\
\vdots & \ddots & \ddots &        &        &      \\
A_{\widehat{N}}    & \ddots & \ddots & \ddots &        &      \\
       & \ddots & \ddots & \ddots & \ddots &      \\
       &        &   A_{\widehat{N}}  & \hdots & A_1    &  A_0 \\
\end{bmatrix} \in \mathbb{C}^{mn \times mn}
\quad \textrm{and} \quad
\widetilde{u}_0 = \begin{bmatrix} u_0 \\ 0 \\ \vdots \\ 0 \end{bmatrix} \in \mathbb{C}^{mn},
\end{equation}
and $\widehat{N}=\min(m-1,N)$.
\end{thm}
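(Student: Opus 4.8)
The plan is to derive an infinite system of coupled ODEs for the exact Taylor coefficients $c_0(t), c_1(t), \ldots$ by substituting the expansion $u(t,\veps)=\sum_{\ell\ge 0}\veps^\ell c_\ell(t)$ into the ODE $\partial_t u = A(\veps)u$, and then observe that this system is already lower block triangular, so its leading $m\times m$ block decouples and is governed exactly by the finite matrix $L_m$.

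First I would write, on the left, $\partial_t u(t,\veps)=\sum_{\ell\ge 0}\veps^\ell \dot c_\ell(t)$, and on the right,
\[
A(\veps)u(t,\veps)=\Bigl(\sum_{j=0}^{N}\veps^j A_j\Bigr)\Bigl(\sum_{p\ge 0}\veps^p c_p(t)\Bigr)
=\sum_{\ell\ge 0}\veps^\ell \sum_{j=0}^{\min(\ell,N)} A_j\, c_{\ell-j}(t).
\]
Both series converge for all $\veps\in\CC$ because $\exp(tA(\veps))u_0$ is entire in $\veps$ (as noted after \eqref{eq:taylor_series}), which justifies differentiating termwise and equating coefficients of like powers of $\veps$. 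Matching the coefficient of $\veps^\ell$ gives, for every $\ell\ge 0$,
\[
\dot c_\ell(t)=\sum_{j=0}^{\min(\ell,N)} A_j\, c_{\ell-j}(t),
\qquad
c_\ell(0)=\begin{cases} u_0,& \ell=0,\\ 0,& \ell\ge 1,\end{cases}
\]
the initial conditions coming from $u(0,\veps)=u_0$ for all $\veps$, hence $c_0(0)=u_0$ and $c_\ell(0)=0$ otherwise.

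Next I would notice the crucial triangular structure: the equation for $\dot c_\ell$ involves only $c_0,\ldots,c_\ell$, never any $c_p$ with $p>\ell$. Therefore the first $m$ equations, for $\ell=0,\ldots,m-1$, form a closed linear system in $c_0,\ldots,c_{m-1}$ alone. Stacking these into $w(t):=\vec(c_0(t),\ldots,c_{m-1}(t))\in\CC^{mn}$, the coefficient $A_j$ multiplies $c_{\ell-j}$, i.e.\ it sits $j$ block-rows below the diagonal; since $\ell-j\ge 0$ forces $j\le \ell\le m-1$, only $A_0,\ldots,A_{\widehat N}$ with $\widehat N=\min(m-1,N)$ actually appear. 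This is exactly the banded lower-triangular block-Toeplitz matrix $L_m$ of \eqref{eq:L_N}, so $\dot w(t)=L_m w(t)$ with $w(0)=\widetilde u_0$. Since this is a constant-coefficient linear ODE, its unique solution is $w(t)=\exp(tL_m)\widetilde u_0$, which is \eqref{eq:block_toeplitz}.

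I do not expect a serious obstacle here; the only point requiring a little care is the justification of termwise differentiation and coefficient matching, which follows from the entirety of $\veps\mapsto\exp(tA(\veps))u_0$ and the uniqueness of Taylor expansions. One should also double-check the banding bound $\widehat N=\min(m-1,N)$ in the degenerate regime $m-1<N$, where the higher coefficients $A_{m},\ldots,A_N$ simply never enter the truncated system — a direct consequence of $j\le\ell\le m-1$ in the convolution sum.
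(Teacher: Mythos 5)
Your proposal is correct and follows essentially the same route as the paper's proof: both derive the recursion $\dot c_\ell=\sum_{j=0}^{\min(\ell,N)}A_j c_{\ell-j}$ with $c_0(0)=u_0$, $c_\ell(0)=0$ for $\ell\ge1$ (the paper via $\frac{1}{j!}\partial_\veps^j|_{\veps=0}$ applied to the ODE, you via direct coefficient matching in the Cauchy product, which is the same computation), and then identify the resulting closed lower-triangular system for $c_0,\dots,c_{m-1}$ with $\dot w=L_m w$. Your explicit remarks on the termwise-differentiation justification and on why only $A_0,\dots,A_{\widehat N}$ with $\widehat N=\min(m-1,N)$ appear are slightly more careful than the paper's, but the argument is the same.
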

\begin{proof}
The proof is based on explicitly forming an associated ODE.
The result can also be proven using a similar result \cite[Theorem~4.1]{Mathias:1996:CHAIN}.
We give here an alternative shorter proof for the case of the matrix
exponential, since some steps of the proof are needed in other parts of this paper.
Differentiating \eqref{eq:taylor_series} yields that for any $j\ge 0$,
\begin{equation} \label{eq:c_coeffs}
\frac{1}{j!} \frac{\partial^j}{\partial \varepsilon^j} u(t,\eps) \bigg|_{\varepsilon=0} = c_j(t).
\end{equation}
By evaluating \eqref{eq:c_coeffs} at $t=0$,
and noting that $u(0,\varepsilon)=u_0$ is independent of $\varepsilon$,
it follows that $c_0(0) = u_0$ and $c_\ell(0) = 0$ for $\ell > 0$.
The initial value problem \eqref{eq:IVP} and the expansion of its solution \eqref{eq:taylor_series} imply that
\begin{equation} \label{eq:c_ode}
\begin{aligned}
 c_j'(t) &= \frac{1}{j!} \frac{\partial^j}{\partial \varepsilon^j} \frac{\partial}{\partial t}
u(t,\varepsilon) \bigg|_{\varepsilon=0} = \frac{1}{j!} \frac{\partial^j}{\partial \varepsilon^j}
\left( \sum\limits_{i=0}^N \varepsilon^i A_i \right) \, u(t,\varepsilon) \bigg|_{\varepsilon = 0} \\
&=  \frac{1}{j!} \frac{\partial^j}{\partial \varepsilon^j} \sum\limits_{i=0}^N \varepsilon^i A_i
\sum\limits_{\ell=0}^\infty \varepsilon^\ell c_\ell(t) \bigg|_{\varepsilon=0} \\
&=  \sum\limits_{i=0}^N \sum\limits_{\ell=0}^\infty \left( \frac{1}{j!} \frac{\partial^j}{\partial \varepsilon^j}
\varepsilon^{i+\ell} \bigg|_{\varepsilon=0} \right) A_i c_\ell(t)\\
&= \sum\limits_{i=0}^{\min(N,j)} A_i c_{j-i}(t).
\end{aligned}
\end{equation}
From \eqref{eq:c_coeffs} and \eqref{eq:c_ode} it follows that the vector
$\vec(c_0(t), \ldots, c_{m-1}(t))$
satisfies the linear ODE \eqref{eq:larger_ODE}
with a solution given by \eqref{eq:block_toeplitz}.
\end{proof}

\begin{algorithm}  
\caption{Infinite Arnoldi algorithm for polynomial uncertain ODEs\label{alg:arnoldi}}
\SetKwInOut{Input}{Input}\SetKwInOut{Output}{Output}
\Input{$u_0\in\CC^n$, $A_0$,\dots,$A_N \in\CC^{n\times n}$}
\Output{Matrices $Q_p\in\CC^{n(1+N(p-1))\times p}$ and $H_p\in\CC^{p\times p}$ representing
approximations of the coefficients $c_{0}$, \dots, $c_{p-1}$ via \eqref{eq:ctildedef}}
\BlankLine
\nl Let $\beta=\|u_0\|$, $Q_1=u_0/\beta$,
  $\underline{H}_0=[\ ]$ \\
\For{$\ell=1,2,\ldots,p$}{
\nl Let $x=Q(:,\ell)\in\CC^{n+(\ell-1)nN}$\\
\nl Compute $y :=\vec(y_1, \dots, y_{1+(\ell-1) N})\in\CC^{n+(\ell-1)nN}$
with \eqref{eq:matvec}\\
\nl Let $\underline Q_\ell:=\begin{bmatrix}Q_\ell\\0\end{bmatrix}\in\CC^{\ell\times (n+\ell nN)}$\\
\nl Compute $h=\underline Q_\ell^*w$\label{step:orth:h}\\
\nl Compute $y_\perp:=y- \underline Q_\ell h$\label{step:orth:wperp}\\
\nl Repeat Steps~\ref{step:orth:h}--\ref{step:orth:wperp}
if necessary\label{step:reorth}\\
\nl Compute $\alpha=\|y_\perp\|$ \\
\nl Let $\underline{H}_\ell=\begin{bmatrix}
\underline{H}_{\ell-1} &h   \\
 0 &\alpha \\
\end{bmatrix}$\\
\nl Let $Q_{\ell+1}:=[\underline Q_\ell,y_\perp/\alpha]\in\CC^{(n+(\ell-1)nN)\times (\ell+1)}$
}
\nl Let $H_p\in\RR^{p\times p}$ be the leading submatrix
of $\underline{H}_p\in\RR^{(p+1)\times p}$\\
\end{algorithm}
\subsection{Algorithm}

Theorem~\ref{thm:block_toeplitz} can be used
to compute the coefficients $c_\ell(t)$ if we can compute the matrix
exponential of $L_m$ times the vector $\widetilde{u}_0$.
We use a Krylov approximation which exploits the structure
of the problem.
See, e.g., \cite{Hochbruck:1997:KRYLOV,Hochbruck:1998:EXPINT} for
literature on Krylov approximations of matrix functions.

The Krylov approximation of $v(t)=\exp(tB)v_0$, consists of
$p$ steps of the Arnoldi iteration for the matrix $B$ initiated
with the vector $v_0$. This results in the Arnoldi relation 
\[
  BQ_p=Q_{p+1}\underline{H}_p,
\]
where $\underline{H}_p\in\CC^{(p+1)\times p}$ is a Hessenberg matrix and
$Q_p$ is an orthogonal matrix spanning the Krylov subspace
$\mathcal{K}_p(B,v_0)=\operatorname{span}(v_0,Bv_0,\ldots,B^{p-1}v_0)$.
The Krylov approximation of $\exp(tB)v_0$ is given by
\[
  v(t)=\exp(tB)v_0\approx Q_p\exp(tH_p)e_1 \, \|v_0\|,
\]
where $H_p\in\CC^{p\times p}$ is the leading submatrix of $\underline{H}_p$,
and $e_1$ is the first unit basis vector.

The only way $B$ appears in the Arnoldi algorithm is in the
form of matrix vector products. Moreover, the Arnoldi
algorithm is initiated with the vector $v_0$.
Suppose we apply this Arnoldi approximation to \eqref{eq:block_toeplitz}.
In the first step we need to compute the matrix vector
product
\begin{equation}\label{eq:Lm_first_step}
  L_m\vec(u_0, 0, \dots, 0)=\vec(A_0 \, u_0, \dots, A_N \, u_0, 0, \dots, 0),
\end{equation}
which is more generally given as follows.
\begin{lem}[Matrix vector product]\label{lem:matvec}
Suppose
$x=\vec(x_1, \dots, x_j, 0, \dots, 0)=\vec(X)\in\CC^{nm}$,
where $x_1,\ldots,x_j\in\CC^n$ and $m>j+N$.
Then,
\[
L_mx=\vec(y_1,\ldots,y_{j+N},0,\ldots,0),
\]
where
\begin{equation}\label{eq:matvec}
  y_\ell=\sum_{i=\max(0,\ell-k)}^{\min(N,\ell-1)}A_ix_{\ell-i},
\;\;\ell=1,\ldots,j+N.
\end{equation}
\end{lem}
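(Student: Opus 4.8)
The plan is to prove Lemma~\ref{lem:matvec} by direct block-wise computation, reading off the action of $L_m$ on a vectorized matrix. First I would set up notation: write $x=\vec(X)$ with $X=[x_1,\dots,x_j,0,\dots,0]\in\CC^{n\times m}$, so that $x=\vec(x_1,\dots,x_m)$ with $x_\ell=0$ for $\ell>j$. The key observation is that the matrix $L_m$ in \eqref{eq:L_N} is block Toeplitz and lower block triangular with $(\ell,r)$ block equal to $A_{\ell-r}$ whenever $0\le \ell-r\le \^N$ and zero otherwise, where $\^N=\min(m-1,N)$. Since we are in the regime $m>j+N$ (so $\^N=N$), the $\ell$-th block of $y=L_mx$ is simply $y_\ell=\sum_{r} (L_m)_{\ell,r}\,x_r=\sum_{i=0}^{N} A_i\,x_{\ell-i}$, where the sum ranges only over those $i$ for which $\ell-i$ is a valid index, i.e.\ $1\le \ell-i\le m$, and automatically vanishes when $\ell-i>j$ because $x_{\ell-i}=0$.

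The remaining steps are bookkeeping on the index set. The constraint $i\ge 0$ together with $\ell-i\ge 1$ gives $0\le i\le \ell-1$; combined with $i\le N$ this yields the upper limit $\min(N,\ell-1)$ in \eqref{eq:matvec}. For the lower limit, the nonzero contributions require $\ell-i\le j$, i.e.\ $i\ge \ell-j$, which gives $\max(0,\ell-j)$ — I note the paper writes $\max(0,\ell-k)$, presumably a typo for $\max(0,\ell-j)$, and I would state it with $j$. Finally, $y_\ell$ is nonzero only if this index set is nonempty and meets a nonzero $x_{\ell-i}$, which happens precisely for $\ell\le j+N$; for $\ell>j+N$ one has $\ell-i\ge \ell-N>j$ for all admissible $i$, so $y_\ell=0$. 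Hence $L_mx=\vec(y_1,\dots,y_{j+N},0,\dots,0)$ as claimed, and the hypothesis $m>j+N$ guarantees all these blocks fit inside the $m$ block rows so no truncation of the band occurs.

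There is essentially no hard part here; the only thing requiring a moment's care is matching the two boundary effects — the finite bandwidth of $L_m$ (upper index $\min(N,\ell-1)$) versus the support of $x$ (lower index $\max(0,\ell-j)$) — and checking that the assumption $m>j+N$ makes $\^N=N$ so that the generic Toeplitz band, rather than a truncated one, is what acts. I would present this as a short paragraph rather than a displayed multi-line computation, since the content is the identification of index ranges rather than any nontrivial manipulation.

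\begin{proof}
Write $x=\vec(X)$ with $X=[x_1,\dots,x_m]\in\CC^{n\times m}$, where $x_{j+1}=\dots=x_m=0$. By \eqref{eq:L_N}, and since $m>j+N$ implies $\^N=\min(m-1,N)=N$, the matrix $L_m$ has $(\ell,r)$ block equal to $A_{\ell-r}$ for $0\le\ell-r\le N$ and zero otherwise. Therefore the $\ell$-th block of $y=L_mx$ is
\[
y_\ell=\sum_{\substack{r:\;1\le r\le m\\0\le\ell-r\le N}}A_{\ell-r}\,x_r
=\sum_{i=\max(0,\ell-j)}^{\min(N,\ell-1)}A_i\,x_{\ell-i},
\]
where we substituted $i=\ell-r$ and used $x_r=0$ for $r>j$ to raise the lower limit to $\max(0,\ell-j)$. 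For $\ell>j+N$ every admissible index satisfies $\ell-i\ge\ell-N>j$, so $x_{\ell-i}=0$ and $y_\ell=0$. Since $m>j+N$, all the blocks $y_1,\dots,y_{j+N}$ lie within the $m$ block rows of $L_m x$, and we conclude $L_mx=\vec(y_1,\dots,y_{j+N},0,\dots,0)$ with $y_\ell$ as in \eqref{eq:matvec}.
\end{proof}
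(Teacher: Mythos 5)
Your proof is correct, and it also correctly flags the typo in the statement: the lower summation limit $\max(0,\ell-k)$ should indeed be $\max(0,\ell-j)$, which is confirmed by the index set the paper itself arrives at in its own derivation. Your route differs from the paper's in presentation rather than substance: the paper writes $L_m=\sum_{i=0}^N S^i\otimes A_i$ with $S$ the shift matrix, applies the identity $(S^i\otimes A_i)\vec(X)=\vec(A_iX(S^i)^T)$, and then reorders the resulting double sum to obtain the index ranges; you instead read off the $(\ell,r)$ block of $L_m$ as $A_{\ell-r}$ and do the bookkeeping on a single blockwise matrix--vector product. Your version is more elementary and arguably more transparent for this lemma, since the two boundary effects (bandwidth $\min(N,\ell-1)$ versus support $\max(0,\ell-j)$) appear explicitly as constraints on one index set; the paper's Kronecker/shift formulation has the side benefit of exhibiting $L_m$ in a form that is uniform in $m$, which dovetails with the later interpretation of the algorithm as Arnoldi's method on the infinite matrix $L_\infty$. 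One small point of care you handled correctly: the hypothesis $m>j+N$ is what guarantees both that $\widehat N=N$ (no truncation of the band) and that the blocks $y_1,\dots,y_{j+N}$ fit within the $m$ block rows.
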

\begin{proof}
Suppose $S\in\RR^{m\times m}$ is the shift matrix
$S:=\sum_{\ell=1}^{m-1}e_{\ell+1}e_{\ell}^T$ which
 satisfies $(S^i)^T=\sum_{\ell=1}^{m-i}e_{\ell}e_{\ell+i}^T$.
We have
\begin{align*}
L_mx & =\sum_{i=0}^N (S^i\otimes A_i)\vec(X)=\sum_{i=0}^N\vec(A_iX(S^i)^T) \\
& = \sum_{i=0}^N\sum_{\ell=1}^{m-i}\vec(A_iXe_\ell e_{\ell+i}^T).
\end{align*}
Note that $Xe_\ell=x_\ell$ if $\ell\le j$ and $Xe_\ell=0$ if $\ell>j$.
Hence,
by using the assumption $m>j+N$, and reordering the terms in the sum
we find the  explicit formula $L_mx=\sum_{i=0}^N\sum_{\ell=1}^{j}\vec(A_ix_\ell e_{\ell+i}^T)
=\sum_{\ell=1}^{j+N}\sum_{i=\max(0,\ell-j)}^{\min(N,\ell-1)} \vec(A_ix_{\ell-i}e_\ell^T)$.
\end{proof}

\vspace{1mm}
Since the Arnoldi method consists of applying matrix vector products and
orthogonalizing the new vector against previous vectors, we
see from \eqref{eq:Lm_first_step} that the second vector in the Krylov subspace will consist of $N+1$ nonzero blocks. Repeated application
of Lemma~\ref{lem:matvec} results in a structure where the $j$th column
in the basis matrix consists of $(j-1)N+1$ nonzero blocks, under the
condition that $m$ is sufficiently large. It is natural
to store only the nonzero blocks of the basis matrix.
By only storing the nonzero blocks, the Arnoldi
method for \eqref{eq:block_toeplitz} reduces to Algorithm~\ref{alg:arnoldi}.

Note that our construction is equivalent to the Arnoldi method and the
output of the algorithm is a basis matrix and a Hessenberg matrix
which together form the approximation of the coefficients $c_0,\ldots,c_{k-1}$
\begin{equation}\label{eq:ctildedef}
 \vec(c_0(t),\ldots,c_{k-1}(t))\approx
 \vec(\widetilde{c}_0(t),\ldots,\widetilde{c}_{k-1}(t))
:=Q_p\exp(tH_p)e_1 \, \|u_0\|,
\end{equation}
where by construction $k=N(p-1)$. The approximation of the solution
 is denoted as \eqref{eq:approximative_series}, i.e.,
\[
  \widetilde{u}_{k,p}(t):=\sum\limits_{\ell=0}^{k-1} \eps^\ell \, \widetilde{c}_\ell(t),
\]
where we added an index $p$ to stress the dependence on iteration.
%
A  feature of this construction is that the algorithm does not
explicitly dependend on $m$, such that it in a sense can be
extended to infinity, i.e., it is equivalent to  Arnoldi's
method on an infinite dimensional operator.
The result can be summarized as follows.
\begin{thm}
The following procedures generate identical results.
\begin{itemize}
  \item[(i)] $p$ iterations of Algorithm~\ref{alg:arnoldi} started with $u_0$ and $A_0$, \dots, $A_N$;
  \item[(ii)] $p$ iterations of Arnoldi's method applied to $L_m$
with starting vector $e_1\otimes u_0\in\CC^{nm}$ for any $m\ge Np$;
  \item[(iii)] $p$ iterations of Arnoldi's method applied to the infinite matrix $L_\infty$
  with the infinite starting vector $e_1\otimes u_0\in\CC^\infty$.
\end{itemize}
\end{thm}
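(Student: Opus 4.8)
The plan is to prove the equivalence of (i), (ii), and (iii) by a careful bookkeeping argument that tracks which blocks of the Arnoldi basis vectors are nonzero and shows that the nontrivial arithmetic performed in each case is literally the same. The central observation, which I would isolate first, is a \emph{block-sparsity invariant}: if $v\in\CC^{nm}$ (or $\CC^\infty$) has the form $\vec(v_1,\dots,v_r,0,\dots,0)$ with $r\le$ some bound, then by Lemma~\ref{lem:matvec} the product $L_m v$ has the form $\vec(w_1,\dots,w_{r+N},0,\dots,0)$, and moreover the blocks $w_1,\dots,w_{r+N}$ depend only on $v_1,\dots,v_r$ and $A_0,\dots,A_N$, \emph{not on $m$}. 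This is the only place the matrix enters the Arnoldi process, so everything downstream (inner products, norms, orthogonalization coefficients) is determined by these leading blocks alone.

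The argument then proceeds by induction on the iteration index $\ell$. The inductive hypothesis is: after $\ell$ steps, in all three procedures the basis vectors $q_1,\dots,q_{\ell+1}$ have the block form $\vec(\ast,\dots,\ast,0,\dots,0)$ with at most $1+(\,j-1)N$ nonzero leading blocks in $q_j$; the nonzero leading blocks agree across (i), (ii), (iii); and the Hessenberg entries produced so far agree. For the inductive step, one applies $L$ to $q_{\ell}$ using the block-sparsity invariant (this is exactly Step~3 of Algorithm~\ref{alg:arnoldi} via \eqref{eq:matvec}), notes that the result still has the required sparsity provided $m$ is large enough that no nonzero block gets truncated --- this is where the condition $m\ge Np$ in (ii) is used, since after $p$ steps the support has grown to at most $1+(p-1)N \le Np$ blocks --- and then observes that orthogonalization against $q_1,\dots,q_{\ell}$ and renormalization involve only the common nonzero leading blocks, so they produce identical $h$, $\alpha$, and hence identical $q_{\ell+1}$ and identical new column of $\underline H_\ell$. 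For procedure (iii) no truncation ever occurs since $L_\infty$ acts on $\CC^\infty$, so the same computation goes through unconditionally; Algorithm~\ref{alg:arnoldi} is then simply the realization of (iii) that stores only the finitely many nonzero blocks, which is a finite set at every finite iteration.

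The main obstacle, and the step deserving the most care, is making precise the claim that ``the Arnoldi iterates are independent of $m$ as long as $m\ge Np$.'' One must verify that the condition $m>j+N$ required by Lemma~\ref{lem:matvec} is met at every one of the $p$ steps --- i.e., track the growth of the support $r_\ell = 1+(\ell-1)N$ and check $m > r_{p} + N$ does not quite suffice as stated, so one should state the sharp bound $m \ge 1+(p-1)N$ or the cleaner sufficient bound $m\ge Np$ and confirm it dominates $r_\ell + N$ for all $\ell\le p$. A secondary subtlety is that Arnoldi's method is only well defined up to the first \emph{breakdown} ($\alpha=0$); one should add the remark that breakdown, if it occurs, occurs at the same iteration in all three procedures (because the relevant norm is computed from the common nonzero blocks), so the equivalence statement is vacuously consistent in that case. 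With these points pinned down the remainder is the routine induction sketched above, and the theorem follows.
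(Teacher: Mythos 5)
Your proposal is correct and follows essentially the same route as the paper, which states this theorem without a formal proof and instead relies on the preceding discussion (the sparsity-propagation consequence of Lemma~\ref{lem:matvec} and the observation that Arnoldi only touches the nonzero blocks); your induction is precisely the formalization of that discussion. Your added care about the exact support bound $1+(p-1)N$ versus $m\ge Np$ and about breakdown is a worthwhile sharpening, but it does not change the argument.
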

%





\section{A priori convergence theory}\label{sect:apriori}

To show the validity of our  approach we now
bound the total error after $p$ iterations, which is separated into two terms as
\begin{equation} \label{eq:error_splitting}
\begin{aligned}
 \err_p(t,\veps) & :=\|u(t,\veps)-\widetilde{u}_{N(p-1),p}(t,\veps)\| \\
& \,\, \le  \errK{N(p-1),p}(t,\veps)+\errT{N(p-1)}(t,\veps),
\end{aligned}
\end{equation}
where
\begin{align}
\errK{k,p}(t,\veps)&:= \|\widetilde{u}_{k,p}(t,\veps)-u_k(t,\veps)\|\label{eq:errKdef}\\[1mm]
\errT{k}(t,\veps)&:= \|u(t,\veps)-u_k(t,\veps)\|.\label{eq:errTdef}
\end{align}
A bound of $\errK{k,p}$, which corresponds to the Krylov approximation of the
expansion coefficients $c_0,\ldots,c_{k-1}$, is given in Section~\ref{sect:apriori_krylov}
and a bound on $\errT{k}$, which corresponds to the truncation of the series,
is given in Section~\ref{sect:apriori_taylor}.
After combining the main results of Section~\ref{sect:apriori_krylov} and Section~\ref{sect:apriori_taylor},
in particular formulas \eqref{eq:krylov_bound} and \eqref{eq:remainder_bound}, we
reach the conclusion that
\begin{multline}\label{eq:totalerror_bound}
 \err_p(t,\veps)  \le  \\C_1(t,\varepsilon) \sum\limits_{\ell=0}^{N-1}
\frac{ C_2(t,\varepsilon)^{ p +\ell-1   } \ee^{C_2(t,\varepsilon)}} {(p +\ell -2)!} \|u_0\|
 +   2  \sqrt{ \frac{1-\abs{\eps}^{2N(p-1)}}{1-\abs{\eps}^2} } \frac{ (t\alpha)^p  \ee^{t\gamma}}{p!}\|u_0\|,
\end{multline}
where $\alpha$ and $\gamma$ are given in \eqref{eq:alphagamma}, and $C_1(t,\varepsilon)$ and
$C_2(t,\varepsilon)$ are given in \eqref{eq:C_1_and_C_2}.
Due to the factorial in the denominator
of \eqref{eq:totalerror_bound}, for fixed $\veps$ and $t>0$,
the total error approaches zero superlinearly
with respect to the iteration count $p$.
\subsection{A bound on the Krylov error}\label{sect:apriori_krylov}
We first study the error generated by the Arnoldi method to approximate the coefficients
$c_0(t)$,\dots,$c_{k-1}(t)$.
We define
\begin{equation}\label{eq:Em}
  E_{k,p}(t)=[c_0(t),\ldots,c_{k-1}(t)]-[\widetilde{c}_0(t),\ldots,\widetilde{c}_{k-1}(t)].
\end{equation}
where $\widetilde{c}_0$, \dots, $\widetilde{c}_k$ are the
approximations given by the Arnoldi method, i.e., by the vector
\[
\widehat{c}_k(t) := \begin{bmatrix} \widetilde{c}_0(t) \\ \vdots \\ \widetilde{c}_{k-1}(t) \end{bmatrix}
= Q_p \exp(t H_p) e_1 \, \norm{u_0}.
\]
Using existing bounds for the Arnoldi approximation of the matrix exponential~\cite{Gallopoulos:1992:EFFICIENT},
we get a bound for the error of this approximation, as follows.
\begin{lem}[Krylov coefficient error bound]\label{lem:krylov_coefficient_bound}
Let $t>0$, $A_0$,\dots,$A_N \in \CC^{n\times n}$,
and $u_0\in\CC^n$. Let $\widetilde{c}_0(t)$,\dots,$\widetilde{c}_{k-1}(t)$ be the
result of Algorithm~\ref{alg:arnoldi},
and let $E_{k,p}(t)$ be defined by \eqref{eq:Em}.
Then, the total error  in the coefficients $\|\vec(E_k(t))\|$ satisfies
\begin{equation*} 
 \|\vec(E_{k,p}(t))\|=\| \exp(tL_k )\widetilde{u}_0-\widehat{c}_k(t) \|
\le 2 \, \frac{ (t \alpha)^p}{p!} \, e^{t\max\{1,\beta\}} \, \|u_0\|
\end{equation*}
where
\begin{equation}\label{eq:alphagamma}
\alpha=\sum_{\ell=0}^N\|A_\ell\| \quad \quad \textrm{and} \quad \quad \beta=\mu(A_0)+\sum_{\ell=1}^N\|A_\ell\|,
\end{equation}
and $\mu(B)$ denotes the logarithmic norm defined
in \eqref{eq:lognorm}
\end{lem}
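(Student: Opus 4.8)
The plan is to reduce the claim to the classical a priori error bound for the Arnoldi approximation of the matrix exponential applied to the finite matrix $L_m$, and then to make the resulting estimate independent of the truncation level $m$.

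First I would use Theorem~\ref{thm:block_toeplitz} to pin down the exact quantity. For every $m\ge k$ we have $\vec(c_0(t),\ldots,c_{k-1}(t))=\exp(tL_k)\widetilde u_0$, and these are the first $k$ block components of $\exp(tL_m)v$ with $v:=e_1\otimes u_0$, because the Taylor coefficients $c_\ell$ do not depend on the truncation level. On the approximation side I would invoke the theorem on equivalence of procedures (i)--(iii): for $m\ge Np$ the output $Q_p,H_p$ of Algorithm~\ref{alg:arnoldi} coincides with (the nonzero part of) the factors obtained from $p$ steps of Arnoldi's method applied to $L_m$ with starting vector $v$, so $\widehat c_k(t)$ is exactly the first $k$ block components of the standard Krylov approximation $\|v\|\,Q_p\exp(tH_p)e_1$ of $\exp(tL_m)v$. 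Since restricting to the first $k$ blocks cannot increase the norm,
\[
\|\vec(E_{k,p}(t))\|=\bigl\|\exp(tL_k)\widetilde u_0-\widehat c_k(t)\bigr\|
\le\bigl\|\exp(tL_m)v-\|v\|\,Q_p\exp(tH_p)e_1\bigr\|
\qquad\text{for every }m\ge Np.
\]

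Next I would apply the Arnoldi error bound of \cite{Gallopoulos:1992:EFFICIENT} to the right-hand side, which for a matrix $B$ and vector $v$ reads
\[
\bigl\|\exp(tB)v-\|v\|\,Q_p\exp(tH_p)e_1\bigr\|\le 2\,\|v\|\,\frac{(t\|B\|)^p}{p!}\,e^{t\max\{1,\mu(B)\}} .
\]
With $B=L_m$ and $\|v\|=\|u_0\|$ this still depends on $m$ through $\|L_m\|$ and $\mu(L_m)$, so the last step is to bound both uniformly in $m$ using the Kronecker representation $L_m=\sum_{i=0}^N S^i\otimes A_i$ from the proof of Lemma~\ref{lem:matvec}, where $S$ is the nilpotent down-shift. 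From $\|S^i\|\le 1$ one gets $\|L_m\|\le\sum_{i=0}^N\|A_i\|=\alpha$. Writing $\tfrac12(L_m+L_m^*)=I\otimes\tfrac12(A_0+A_0^*)+\tfrac12\sum_{i=1}^N\bigl(S^i\otimes A_i+(S^i)^*\otimes A_i^*\bigr)$, the first summand has largest eigenvalue $\mu(A_0)$ and the remainder has spectral norm at most $\sum_{i=1}^N\|A_i\|$, so by subadditivity of $\lambda_{\max}$ over Hermitian matrices $\mu(L_m)\le\mu(A_0)+\sum_{i=1}^N\|A_i\|=\beta$. Inserting $\|L_m\|\le\alpha$ and $\mu(L_m)\le\beta$ gives the asserted bound $2\,(t\alpha)^p/p!\;e^{t\max\{1,\beta\}}\,\|u_0\|$.

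The norm estimates on $L_m$ are routine; the step that needs the most care is the bookkeeping in the second paragraph --- verifying that Algorithm~\ref{alg:arnoldi} genuinely reproduces the finite-dimensional Arnoldi factors of $L_m$ for $m\ge Np$ and that discarding all but the first $k$ blocks is harmless --- so that a standard a priori bound is applicable and can then be rendered independent of $m$.
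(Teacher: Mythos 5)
Your proposal is correct and follows essentially the same route as the paper: the paper's proof is a one-line appeal to the Arnoldi error bound of Gallopoulos--Saad together with the bounds $\|L_m\|\le\alpha$ and $\mu(L_m)\le\beta$, which are exactly the estimates you derive inline (the paper proves them in Appendix~A via the block-Toeplitz structure and a field-of-values argument, while you use the Kronecker representation and subadditivity of $\lambda_{\max}$ --- equivalent in substance). Your extra bookkeeping --- identifying the algorithm's output with the Arnoldi factors of $L_m$ for $m\ge Np$ and noting that projecting onto the first $k$ blocks cannot increase the norm --- is a point the paper leaves implicit, and handling it explicitly is if anything more careful than the published argument.
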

\begin{proof}
The result follows directly from~\cite[Thm.\;2.1]{Gallopoulos:1992:EFFICIENT},
and Lemma~\ref{lem:norm_bound}
and Corollary~\ref{cor:log_norm_bound} in Appendix~A.
\end{proof}

The coefficient error bound in Lemma~\ref{lem:krylov_coefficient_bound},
implies the following bound on the error $\errK{k,p}$, via the relation
\begin{equation}  \label{eq:errKmE}
  \errK{k,p}(t,\varepsilon)=\|E_{k,p}(t)[1,\varepsilon,\ldots,\varepsilon^{k-1}]^T\|.
\end{equation}
\begin{thm}[Krylov error bound] \label{thm:krylov_error}
Let $\errK{k,p}$ be defined in \eqref{eq:errKdef}
corresponding to applying $p$ steps of Algorithm~\ref{alg:arnoldi} to $A_0$,\dots,$A_N\in\CC^{n\times n}$ and $u_0\in\CC^n$. Then,
\begin{equation}  \label{eq:krylov_bound}
  \errK{k,p}(t,\veps)\le
 2 \, \sqrt{ \frac{1-\abs{\eps}^{2k}}{1-\abs{\eps}^2} } \ \frac{ (t\alpha)^p  \ee^{t\max\{1,\beta\}  }}{p!} \, \norm{ u_0 }
\end{equation}
where $\alpha$ and $\beta$ are given in \eqref{eq:alphagamma}.
\end{thm}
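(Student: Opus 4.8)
The plan is to combine the identity \eqref{eq:errKmE} with the coefficient bound of Lemma~\ref{lem:krylov_coefficient_bound}, and nothing more. Starting from
\[
\errK{k,p}(t,\veps)=\norm{E_{k,p}(t)\,[1,\veps,\ldots,\veps^{k-1}]^T},
\]
I would apply submultiplicativity of the Euclidean norm to split the right-hand side as $\norm{E_{k,p}(t)}_2\,\norm{[1,\veps,\ldots,\veps^{k-1}]^T}_2$, where $\norm{\cdot}_2$ denotes the spectral norm of the $n\times k$ matrix $E_{k,p}(t)$ in \eqref{eq:Em}.

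Next I would estimate the two factors separately. The vector factor is a geometric sum, $\norm{[1,\veps,\ldots,\veps^{k-1}]^T}_2=\bigl(\sum_{\ell=0}^{k-1}\abs{\eps}^{2\ell}\bigr)^{1/2}=\bigl((1-\abs{\eps}^{2k})/(1-\abs{\eps}^2)\bigr)^{1/2}$, which is exactly the square-root prefactor in \eqref{eq:krylov_bound}. For the matrix factor I would pass from the spectral to the Frobenius norm, $\norm{E_{k,p}(t)}_2\le\norm{E_{k,p}(t)}_F=\norm{\vec(E_{k,p}(t))}$, the last equality holding because vectorization is an isometry between the Frobenius and Euclidean norms (and is independent of the block ordering used in the definition of $\vec$). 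Lemma~\ref{lem:krylov_coefficient_bound} then bounds $\norm{\vec(E_{k,p}(t))}$ by $2\,(t\alpha)^p\,\ee^{t\max\{1,\beta\}}\,\norm{u_0}/p!$, and multiplying the two estimates produces \eqref{eq:krylov_bound}.

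There is essentially no real obstacle here; the one point worth flagging is that Lemma~\ref{lem:krylov_coefficient_bound} controls $\norm{\vec(E_{k,p}(t))}$ rather than the spectral norm directly, so the (mildly lossy) detour through the Frobenius norm is what makes the chain of inequalities close. All constants, namely the factor $2$ and the quantities $\alpha$ and $\beta$ of \eqref{eq:alphagamma}, are inherited verbatim from Lemma~\ref{lem:krylov_coefficient_bound}, so no fresh estimates of matrix norms or of the logarithmic norm are required at this stage.
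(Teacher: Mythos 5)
Your proof is correct and takes essentially the same route as the paper: the paper applies the Cauchy--Schwarz inequality directly to $\sum_{\ell=0}^{k-1}\eps^\ell\bigl(c_\ell(t)-\widetilde{c}_\ell(t)\bigr)$, which produces exactly the product $\sqrt{(1-\abs{\eps}^{2k})/(1-\abs{\eps}^2)}\cdot\norm{\vec(E_{k,p}(t))}$ that you reach via the equivalent chain $\norm{Ex}\le\norm{E}_2\norm{x}\le\norm{E}_F\norm{x}$. Your brief detour through the spectral norm is harmless, and the final bound and constants coincide with those obtained from Lemma~\ref{lem:krylov_coefficient_bound} in the paper.
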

\begin{proof}
By  \eqref{eq:errKmE} and the Cauchy--Schwarz inequality we have
that
\begin{align*}
\errK{k,p}(t,\veps) & = \norm{\sum\limits_{\ell=0}^{k-1} \eps^\ell ( {c}_\ell(t) -  \widetilde{c}_\ell(t) ) } \\
& \le \sqrt{ \sum\limits_{\ell=0}^{k-1} \abs{\eps^\ell}^2} \
\sqrt{ \sum\limits_{\ell=0}^{k-1} \norm{ {c}_\ell(t) -  \widetilde{c}_\ell(t) }^2} \\
& = \sqrt{ \frac{1-\abs{\eps}^{2k}}{1-\abs{\eps}^2} } \ \norm{\vec(E_{k,p}(t))}.
\end{align*}
The claim follows now from Lemma~\ref{lem:krylov_coefficient_bound}.
\end{proof}
\subsection{A bound on the truncation error}\label{sect:apriori_taylor}
The previous subsection gives us an estimate for the error in the coefficient vectors $c_\ell(t)$.
To characterize the total error of our approach,
we now analyze the second term in the error splitting \eqref{eq:error_splitting}, i.e., the remainder
\begin{equation*} 
\errT{k}(t,\eps) := \norm{u(t,\eps) - \sum\limits_{\ell=0}^{k-1} \eps^\ell c_\ell(t)}.
\end{equation*}
Lemma \ref{lem:c_bound} of Appendix gives a bound for the norms
of $c_\ell(t)$ and can be used to
derive the following theorem which bounds $\errT{k}(t,\eps)$.

\begin{thm}[Remainder bound] \label{thm:remainder_conv}
Let $c_0$,$c_1$,\dots be the coefficients of the $\varepsilon$-expansion
\eqref{eq:taylor_series}  of $u(t,\varepsilon)$ when
 $N\geq1$. 
Then, the error $\errT{k}(t,\varepsilon)$ is bounded as
\begin{equation} \label{eq:remainder_bound}
\errT{k}(t,\varepsilon) \leq C_1(t,\varepsilon) \sum\limits_{\ell=0}^{N-1}
\frac{ C_2(t,\varepsilon)^{ \lfloor \frac{k}{N} \rfloor +\ell   } } {(\lfloor \frac{k}{N} \rfloor +\ell -1)!},
\end{equation}
where
\begin{equation} \label{eq:C_1_and_C_2}
\begin{aligned}
C_1(t,\varepsilon) &= \abs{\varepsilon}^{\mathrm{sign}(\abs{\varepsilon} - 1)}
\ee^{t (\mu(A_0) + \ee N a) +  C_2(t,\varepsilon) - 1}   \norm{u_0}, \\
C_2(t,\varepsilon) &= \abs{\varepsilon}^N \ee N t a.
\end{aligned}
\end{equation}
\end{thm}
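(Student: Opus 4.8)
The plan is to bound the tail $\errT{k}(t,\varepsilon)=\norm{\sum_{\ell\ge k}\varepsilon^\ell c_\ell(t)}$ directly using the triangle inequality together with the norm estimate for $c_\ell(t)$ provided by Lemma~\ref{lem:c_bound} of the Appendix. First I would invoke that lemma to obtain a bound of the form $\norm{c_\ell(t)}\le K(t)\,\dfrac{(Nta\ee)^{\lceil \ell/N\rceil}}{(\lceil \ell/N\rceil)!}$ or similar — i.e. a bound whose factorial growth in the denominator depends on $\ell$ only through $\lfloor \ell/N\rfloor$ (reflecting the fact that, because $A(\varepsilon)$ has degree $N$, the recursion \eqref{eq:c_ode} only ``advances'' one power of $t$ every $N$ steps). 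The precise constants should be exactly those that produce $C_1$ and $C_2$ in \eqref{eq:C_1_and_C_2}; the factor $\abs{\varepsilon}^{\mathrm{sign}(\abs{\varepsilon}-1)}$ and the $\ee^{t(\mu(A_0)+\ee Na)}\norm{u_0}$ term will come directly from the statement of Lemma~\ref{lem:c_bound}.

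The key step is then to group the tail sum in blocks of length $N$. Writing $\ell = Nq + r$ with $0\le r<N$ and $q\ge \lfloor k/N\rfloor$ (being careful at the lower boundary, since for $\ell\ge k$ the smallest relevant value of $q$ is $\lfloor k/N\rfloor$), one gets
\[
\errT{k}(t,\varepsilon)\le \sum_{\ell\ge k}\abs{\varepsilon}^\ell\norm{c_\ell(t)}
\le K(t)\sum_{r=0}^{N-1}\ \sum_{q\ge \lfloor k/N\rfloor}\abs{\varepsilon}^{Nq+r}\,\frac{(Nta\ee)^{q+1}}{(q+1)!}.
\]
For each fixed $r$ the inner sum over $q$ is a tail of the exponential series for $\ee^{\abs{\varepsilon}^N Nta\ee}=\ee^{C_2(t,\varepsilon)}$, and a standard tail estimate for $\sum_{q\ge q_0} x^q/q!$ gives a leading term $x^{q_0}/(q_0-1)!$ times a bounded factor $\ee^{x}$; absorbing $\ee^{C_2}$ and the remaining $\abs{\varepsilon}$-powers into $C_1$ and shifting indices produces precisely the claimed sum $\sum_{\ell=0}^{N-1} C_2^{\lfloor k/N\rfloor+\ell}/(\lfloor k/N\rfloor+\ell-1)!$. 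Monotonicity/crude-bounding of $\abs{\varepsilon}^r$ by $1$ or $\abs{\varepsilon}^{N-1}$ according to whether $\abs{\varepsilon}\le 1$ or $\abs{\varepsilon}>1$ accounts for the $\abs{\varepsilon}^{\mathrm{sign}(\abs{\varepsilon}-1)}$ factor.

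The main obstacle, I expect, is purely bookkeeping: matching the index ranges and the off-by-one shifts so that the factorial in the denominator comes out as $(\lfloor k/N\rfloor+\ell-1)!$ rather than $(\lfloor k/N\rfloor+\ell)!$ or $(\lfloor k/N\rfloor+\ell+1)!$, and verifying that the tail-of-exponential estimate is valid uniformly (it requires $q_0\ge 1$, hence implicitly $k\ge N$, or else a trivial adjustment). There is no genuine analytic difficulty beyond Lemma~\ref{lem:c_bound}; the content is in choosing the grouping into residue classes mod $N$ so that the superlinear $1/(\cdot)!$ decay survives the summation over $\ell$, and in bundling all the $t$- and $\varepsilon$-dependent prefactors into the compact constants $C_1(t,\varepsilon)$ and $C_2(t,\varepsilon)$.
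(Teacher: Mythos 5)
Your plan is correct and follows essentially the same route as the paper's proof: invoke Lemma~\ref{lem:c_bound} for $\norm{c_\ell(t)}$, absorb $\abs{\varepsilon}^\ell$ into $(\abs{\varepsilon}^N)^{\lceil \ell/N\rceil}$ at the cost of the $\abs{\varepsilon}^{\mathrm{sign}(\abs{\varepsilon}-1)}$ factor, split the tail into the $N$ residue classes mod $N$, and finish each inner sum with the exponential-tail inequality $\sum_{\ell\ge k} x^\ell/\ell! \le x^k\ee^x/k!$. The only details you leave open are exactly the off-by-one bookkeeping you flag (the lemma's denominator is $(\lceil \ell/N\rceil-1)!$, which after reindexing yields the $(\lfloor k/N\rfloor+\ell-1)!$ in the statement), and these work out as you anticipate.
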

\begin{proof}
From Lemma~\ref{lem:c_bound} it follows that
\[
\errT{k}(t,\varepsilon) =
\norm{ \sum_{\ell=k}^\infty \varepsilon^\ell c_\ell(t)} \leq
\sum_{\ell=k}^\infty \abs{\varepsilon}^\ell \norm{c_\ell(t)} \leq
\widetilde{C}_1(t,\varepsilon) \sum\limits_{\ell=k}^\infty \abs{\varepsilon}^\ell \frac{( \ee N t a)^{\lceil \frac{\ell}{N} \rceil}}
{(\lceil \frac{\ell}{N} \rceil -1)!},
\]
where $\widetilde{C}_1(t,\varepsilon) = \ee^{t (\mu(A_0) + \ee N a) - 1} \norm{u_0}$.

Setting $\widetilde{k} = k - N \lfloor \frac{k}{N} \rfloor$
and using the bound $c^\ell = (c^N)^{\frac{\ell}{N}} \leq
(c^N)^{\lceil \frac{\ell}{N} \rceil} c^{ \mathrm{sign}(c - 1)}$ for $c>0$, we get
\begin{equation*}
\begin{aligned}
\sum\limits_{\ell=k}^\infty \abs{\varepsilon}^\ell \frac{( \ee N t a)^{\lceil \frac{\ell}{N} \rceil}}
{(\lceil \frac{\ell}{N} \rceil -1)!}
 & \leq
\abs{\varepsilon}^{ \mathrm{sign}(\abs{\varepsilon} - 1)} \sum\limits_{\ell=\widetilde{k}}^\infty
\frac{(\abs{\varepsilon}^N  \ee N t a)^{\lceil \frac{\ell}{N} \rceil}}
{(\lceil \frac{\ell}{N} \rceil -1)!} \\
 & = \abs{\varepsilon}^{\mathrm{sign}(\abs{\varepsilon} - 1)} \sum\limits_{j=0}^{N-1}
\sum\limits_{\ell = \lfloor \frac{k}{N} \rfloor + j}^\infty
\frac{(\abs{\varepsilon}^N  \ee N t a)^{\ell} }{(\ell-1)!}.
\end{aligned}
\end{equation*}
Using the inequality~\cite[Lemma\;4.2]{Saad:1992:MATEXP}
\begin{equation} \label{eq:exp_remainder_bound}
\sum\limits_{\ell=k}^\infty \frac{x^\ell}{\ell !} \leq \frac{x^k \ee^x}{k!} \quad \textrm{for} \quad x>0,
\end{equation}
the claim follows.
\end{proof}

We also give a bound for the special case $N=1$ since it is in this case considerably
lower than the one given in Theorem~\ref{thm:remainder_conv}.
\begin{thm}[Remainder bound $N=1$] \label{thm:remainder_N_1}
Let $N=1$. Then the remainder $\errT{k}$ is bounded as
\begin{equation} \label{eq:bound_N_1}
\errT{k}(t,\veps) \leq \frac{ \ee^{t( \mu(A_0) + \abs{\eps}\norm{A_1}) } ( \abs{ \varepsilon} \norm{t A_1})^k }{k!} \norm{u_0}.
\end{equation}
\end{thm}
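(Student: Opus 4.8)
The plan is to bypass the general coefficient estimate of Lemma~\ref{lem:c_bound}, which is wasteful here, and instead derive a sharp bound on $\norm{c_\ell(t)}$ tailored to the case $N=1$, then sum the tail of the resulting series exactly as in the proof of Theorem~\ref{thm:remainder_conv}. Concretely, I would first record the reduction
\[
  \errT{k}(t,\veps) = \norm{\sum_{\ell=k}^\infty \veps^\ell c_\ell(t)} \le \sum_{\ell=k}^\infty \abs{\veps}^\ell \norm{c_\ell(t)},
\]
which is legitimate since \eqref{eq:taylor_series} converges absolutely; everything then comes down to bounding $\norm{c_\ell(t)}$ well.

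For $N=1$ the recursion \eqref{eq:c_ode} reads $c_j'(t) = A_0 c_j(t) + A_1 c_{j-1}(t)$ with $c_0(0)=u_0$ and $c_j(0)=0$ for $j\ge1$. By the variation-of-constants formula, $c_0(t)=\ee^{tA_0}u_0$ and $c_j(t)=\int_0^t \ee^{(t-s)A_0}A_1 c_{j-1}(s)\,\dd s$ for $j\ge1$, so unrolling the recursion gives the iterated-integral representation
\[
  c_\ell(t)=\int_{0\le s_\ell\le\cdots\le s_1\le t}\ee^{(t-s_1)A_0}A_1\ee^{(s_1-s_2)A_0}A_1\cdots A_1\ee^{s_\ell A_0}u_0\;\dd s_\ell\cdots\dd s_1 .
\]
Using the logarithmic-norm bound $\norm{\ee^{sA_0}}\le\ee^{s\mu(A_0)}$ for $s\ge0$ (Corollary~\ref{cor:log_norm_bound}), the product of the $\ell+1$ exponential norms telescopes because $(t-s_1)+(s_1-s_2)+\cdots+s_\ell=t$, each of the $\ell$ factors $A_1$ contributes $\norm{A_1}$, and the remaining integral is just the volume $t^\ell/\ell!$ of the simplex $\{0\le s_\ell\le\cdots\le s_1\le t\}$. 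This yields the clean bound
\[
  \norm{c_\ell(t)} \le \ee^{t\mu(A_0)}\,\frac{(t\norm{A_1})^\ell}{\ell!}\,\norm{u_0} .
\]

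Finally I would substitute this into the tail sum and apply the elementary remainder estimate \eqref{eq:exp_remainder_bound} with $x=\abs{\veps}\,t\norm{A_1}>0$:
\[
  \errT{k}(t,\veps) \le \ee^{t\mu(A_0)}\norm{u_0}\sum_{\ell=k}^\infty \frac{(\abs{\veps}\,t\norm{A_1})^\ell}{\ell!} \le \ee^{t\mu(A_0)}\norm{u_0}\,\frac{(\abs{\veps}\,t\norm{A_1})^k\,\ee^{\abs{\veps}\,t\norm{A_1}}}{k!},
\]
which is exactly \eqref{eq:bound_N_1} after writing $t\norm{A_1}=\norm{tA_1}$ and combining the two exponentials. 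None of the steps is genuinely hard; the only point needing care is that the direct simplex-integral argument (rather than a call to Lemma~\ref{lem:c_bound}) is what produces the factorial $\ell!$ instead of $(\ell-1)!$ and the single exponential factor $\ee^{t\mu(A_0)}$, so the mild obstacle is just handling the Fubini/telescoping bookkeeping cleanly enough to see that no constants are lost.
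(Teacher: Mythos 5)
Your proof is correct and follows essentially the same route as the paper: for $N=1$ the only composition in $C_{\ell,1}$ is $(1,\dots,1)$, so your iterated-integral representation is Lemma~\ref{lem:c_int_representation} specialized, your Dahlquist-plus-simplex-volume bound is exactly Lemma~\ref{lem:c_term_bound} with $r=\ell$ and $a=\norm{A_1}$, and the tail is then summed via \eqref{eq:exp_remainder_bound} just as in the paper. (One minor citation slip: the estimate $\norm{\ee^{sA_0}}\le\ee^{s\mu(A_0)}$ is the Dahlquist bound invoked in the proof of Lemma~\ref{lem:c_term_bound}, not Corollary~\ref{cor:log_norm_bound}, which concerns $\mu(L_m)$.)
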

\begin{proof}
From Lemma~\ref{lem:c_int_representation} and \eqref{eq:c_sum_representation}
we see that $c_\ell(t)$ consists now of one integral term which can be bounded by
Lemma~\ref{lem:c_term_bound} giving
\[
\norm{c_\ell(t)} \leq \frac{\norm{t A_1}^k}{k!} \, \ee^{t \mu(A_0)} \, \norm{u_0}.
\]
Therefore
\[
\errT{k}(t,\veps) \leq \sum_{\ell=k}^\infty \abs{\varepsilon}^\ell \, \norm{c_\ell(t)} \leq \ee^{t \mu(A_0)}
\sum_{\ell=k}^\infty \frac{( \abs{\varepsilon}  \norm{t A_1})^\ell  }{\ell!} \norm{u_0},
\]
and the claim follows from the inequality \eqref{eq:exp_remainder_bound}.
\end{proof}

\section{An a posteriori error estimate for the Krylov approximation}\label{sect:aposteori}
Although the previous section provides a proof of convergence the final bound
is not very useful to estimate the error. We therefore also propose the
following a posteriori error estimates, which appear to work well in
the simulations in Section~\ref{sect:examples}.

Let $Q_p \exp(H_p) e_1$ be the approximation of $\ee^A b$, $\norm{b}=1$, by $p$ steps of the Arnoldi method.
Then, due to the fact
that our algorithm is equivalent to the standard Arnoldi method,
the following expansion holds \cite{Saad:1992:MATEXP}
\begin{equation} \label{eq:expansion}
  \ee^A b - Q_p \exp(H_p) e_1 = h_{p+1,p} \sum\limits_{\ell=1}^{\infty} e_p^T \varphi_\ell (H_p)e_1 \, A^{\ell-1} q_{p+1},
\end{equation}
where $\varphi_\ell(z) = \sum_{j=0}^\infty \tfrac{z^j}{(j+\ell)!}$ and $q_{p+1}$
is the $(p+1)$st basis vector given by the Arnoldi iteration.

We estimate the error of the Arnoldi approximation
of $\exp( t L_k ) \, \widetilde{u}_0$,
i.e., the approximation of the vector $\vec(E_{k,p}(t))$, 
by using the norm of the first two terms in  \eqref{eq:expansion}.
This gives us the estimate
\begin{equation}  \label{eq:vec_est}
\begin{aligned}
\vec(E_{k,p}(t)) & \approx h_{p+1,p} \big( e_p^T \varphi_1(t H_p) e_1 \, q_{p+1} +
e_p^T \varphi_2(t H_p) e_1 \, (t L_k)  q_{p+1} \big) \norm{u_0} \\
& := \widetilde{\mathrm{err}}_{k,p}(t).
\end{aligned}
\end{equation}
Then, for the Krylov error $ \errK{k,p}(t,\veps)$ in the total error
\eqref{eq:error_splitting}, we obtain an estimate $\widetilde{\mathrm{err}}_{\mathrm{K},k,p}(t,\veps)$ directly
using \eqref{eq:errKmE}:
\begin{equation} \label{eq:post_estimate}
\begin{aligned}
\errK{k,p}(t,\veps) & = \|E_{k,p}(t)[1,\varepsilon,\ldots,\varepsilon^{k-1}]^T\| \\
&= \|  \big(I_n \otimes [1,\varepsilon,\ldots,\varepsilon^{k-1}]\big)  \vec(E_{k,p}(t)) \| \\
& \approx \| \big(I_n \otimes [1,\varepsilon,\ldots,\varepsilon^{k-1}]\big) \, \widetilde{\mathrm{err}}_{k,p}(t) \|
=: \widetilde{\mathrm{err}}_{\mathrm{K},k,p}(t,\veps),
\end{aligned}
\end{equation}
where $k=1+(N-1)p$. Notice that the
scalars $e_p^T \varphi_1(t H_p) e_1$ and $e_p^T \varphi_2(t H_p) e_1$ in \eqref{eq:vec_est}
can be obtained with a small extra cost using the fact that~\cite[Thm.\;2.1]{AlMohy:2011:EXPINT}
\[
\begin{bmatrix} I_p & 0 \end{bmatrix}
\exp\left( \begin{bmatrix} H_p & e_1 & e_1 \\ 0 & 0 & 0 \\ 0 & 1 & 0 \end{bmatrix} \right)
= \begin{bmatrix} \exp(H_p) & \varphi_1(H_p)e_1 + \varphi_2(H_p)e_1 & \varphi_1(H_p) e_1 \end{bmatrix}.
\]
Since the a priori bound given by Theorem~\ref{thm:remainder_conv} is rather pessimistic in
practice, in numerical experiments we only use the Krylov error estimate \eqref{eq:post_estimate} as
a total error estimate when $N\geq 2$.
For $N=1$, we use also the truncation bound given in Theorem~\ref{thm:remainder_N_1}, i.e., the total estimate is then
\begin{align}
\norm{u(t,\veps) - \widetilde{u}_{k,p}(t,\veps)} & \leq \errT{k}(t,\veps) + \errK{k,p}(t,\veps)
\nonumber \\
& \approx \errKt{k,p}(t,\veps) + \frac{ \ee^{t \left( \mu(A_0) + \abs{\eps} \norm{A_1} \right) }
\left( \abs{ \veps } \norm{t A_1} \right)^p }{p!}.
\label{eq:estimate_N_1}
\end{align}

\section{Numerical examples}\label{sect:examples}

The behavior of the algorithm is now illustrated for two test problems: one stemming from spatial discretization
of an advection-diffusion equation and the other one
appearing in the literature \cite{Lietaert:2015:INTERPOLATORY}
corresponding to the discretization of a damped wave equation.



\subsection{Scaling of $L_m$}

%
It turns out that the performance of the algorithm can be improved by
performing a transformation which scales the coefficient matrices.
This scaling can be carried out as follows.
Let $A_0$, $A_1$, \dots, $A_N \in \mathbb{C}^{n \times n}$ and $L_m$
be the corresponding block-Toeplitz matrix of the form \eqref{eq:L_N}.
Let $\gamma>0$ and define $\Sigma_m := \textrm{diag}(1,\gamma,\ldots,\gamma^{m-1}) \otimes I_n$.
Then it clearly holds
\begin{equation*}
\begin{aligned}
\widehat{c}(t) =
\exp(t L_m) \, \widetilde{u}_0
&= \Sigma_m \exp(t \Sigma_m^{-1} L_m \Sigma_m) \, \widetilde{u}_0 \\
& = \Sigma_m \exp(t \widehat{L}_m) \, \widetilde{u}_0,
\end{aligned}
\end{equation*}
where $\widehat{L}_m$ is the matrix \eqref{eq:L_N} corresponding to
$A_0, \gamma^{-1} A_1, \ldots, \gamma^{-N} A_N$.

Thus, we see that using this scaling strategy  corresponds to the changes
\begin{equation} \label{eq:scaling}
\epsilon \rightarrow \gamma \epsilon \quad \textrm{and} \quad
A_\ell \rightarrow \gamma^{-\ell} A_\ell 
\end{equation}
when performing the Arnoldi approximation of the product
$\exp(t \widehat{L}_m) \, \widetilde{u}_0$. This is also evident from the original
ODE \eqref{eq:IVP}.

The performance of the algorithm appears to improve when
we scale the norms of coefficients $A_\ell$, $1 \leq \ell \leq N$, such that
they are of the order 1 or less.
To balance the norms, we used the heuristic choice
\begin{equation} \label{eq:heuristic_gamma}
\gamma = \max\limits_{1 \leq \ell \leq N} \ \norm{A_\ell}^{1/\ell}.
\end{equation}
This was found to work well in all of our numerical experiments, giving both good convergence
and a posteriori error estimates.

We note that scaling has also been exploited for
polynomial eigenvalue problems, e.g.,
in \cite{Fan:2004:NORMWISE}.
Our scaling \eqref{eq:heuristic_gamma} can be interpreted
as a slight variation of the scaling proposed in \cite[Thm.~6.1]{Betcke:2008:OPTIMAL}.
Another related scaling, one for the matrix exponential of an augmented matrix, can be found in
\cite[p.~492]{AlMohy:2011:EXPINT}.

\subsection{Advection-diffusion operator}

Consider
the 1-d advection-diffusion equation
\begin{equation} \label{eq:advdiff}
\frac{\partial}{\partial t} y(t,x) = a \, \frac{\partial^2}{\partial x^2} y(t,x) + \eps \, \frac{\partial}{\partial x} y(t,x), \quad y(0,x) = y_0(x)
\end{equation}
with Dirichlet boundary conditions on the interval $[0,1]$
and $y_0(x) = 16 \, ((1-x)x)^2$. The
spatial discretization using
central finite differences gives the ordinary differential equation $u' = (A_0 + \eps A_1) \, u$,
$u(0) = u_0 \in \mathbb{R}^n$, where the matrices $A_0$ and $A_1$ are of the form
\[
A_0 =  \frac{a }{(\Delta x)^2}\begin{bmatrix}
    -2 & 1 & & & \\
    1 & -2 & 1 & & \\
      & \ddots & \ddots & \ddots & \\
      & & 1& -2& 1 \\
      & & & 1 & -2 \end{bmatrix}, \,\,
      A_1 =
 \frac{1}{2 \Delta x}\begin{bmatrix}
     & -1 & & & \\
    1 &   & -1 & & \\
      & \ddots & \ddots & \ddots & \\
      & & 1&  & - 1 \\
      & & & 1 &  \end{bmatrix},
\]
where $\Delta x = (n+1)^{-1}$ and $u_0$ is the discretization of $y_0(x)$.
We set $n=200$ and $a=3 \cdot 10^{-4}$, and approximate at $t=0.5$.
 Then, $\norm{t A_0} \approx 95$.
We compute the approximations $\widetilde{u}_{k,p}(t,\eps)$ for
$\eps =   10^{-3}, 1.5 \cdot 10^{-2}$, and $3 \cdot 10^{-2}$.
Then, respectively, $\norm{t \eps A_1} \approx 0.4,6.0$ and $12.0$.
Figure~\ref{fig:figure1} shows the 2-norm errors of these
approximations and the corresponding a posteriori error estimates using \eqref{eq:estimate_N_1}.
We observe superlinear convergence for the error and the estimate.

\begin{figure}[h!]
\begin{center}
\includegraphics[scale=1]{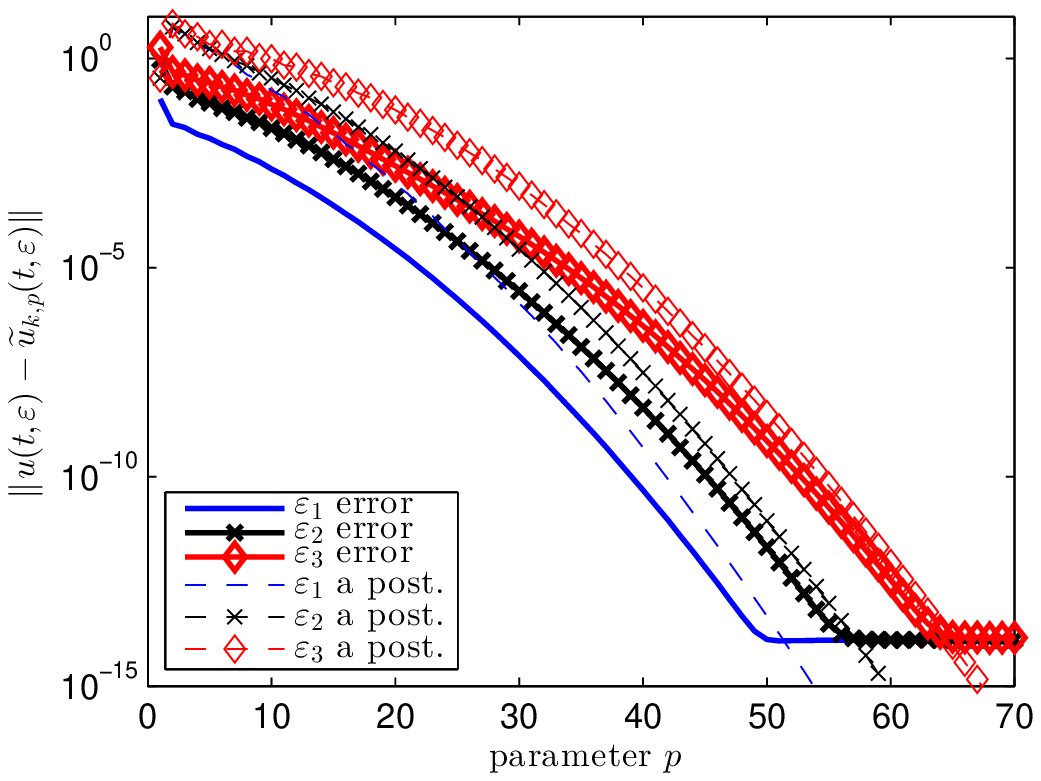}%
\end{center}
\caption{2-norm errors of approximations $\widetilde{u}_{k,p}(t,\eps)$
 and the estimate \eqref{eq:estimate_N_1} when $\eps$ has the values
$\eps_1 = 1\cdot 10^{-3}$, $\eps_2 = 1.5 \cdot 10^{-2}$ and $\eps_3 = 3 \cdot 10^{-2}$.}
\label{fig:figure1}
\end{figure}

To illustrate the generality of our approach we now
consider the case $N=2$, namely a modification of \eqref{eq:advdiff}
\begin{equation} \label{eq:advdiff2}
\frac{\partial}{\partial t} y(t,x) = a \frac{\partial^2}{\partial x^2} y(t,x) + \eps \frac{\partial}{\partial x} y(t,x) + \eps^2 b \, y(t,1-x), \quad y(0,x) = y_0(x);
\end{equation}
the extra term can be interpreted as a non-localized feedback.
We set the parameter $a = 3 \cdot 10^{-4}$ and
$b=2 \cdot 10^2$. The spatial discretization with finite differences
gives the ODE $u' = (A_0 + \eps A_1 + \eps^2 A_2) \, u$,
$u(0) = u_0 \in \mathbb{R}^n$, where $u_0$ and the matrices $A_0$ and $A_1$ are as above, and
\[
A_2 = b \cdot \begin{bmatrix} & & 1 \\ & \Ddots & \\  1 & & \end{bmatrix}.
\]
We compute the approximations $\widetilde{u}_{k,p}(t,\eps)$ for
$\eps =   10^{-3}, 1.5 \cdot 10^{-2}$ and $3 \cdot 10^{-2}$,
for which respectively, $\norm{t \eps^2 A_2} \approx 5.0 \cdot 10^{-4}$, $0.11$, and $0.45$.
%
Figure~\ref{fig:figure2} shows the  2-norm errors of these
approximations and the corresponding a posteriori error estimates using \eqref{eq:estimate_N_1}.

In Figure~\ref{fig:figure3} we illustrate the dependence of the convergence on the scaling.
Clearly the choice \eqref{eq:heuristic_gamma} results in the fastest convergence for this example.
Simulations with $\gamma$ larger than what is suggested by \eqref{eq:heuristic_gamma} did not
result in substantial improvement of the convergence.

\begin{figure}[h!]
\begin{center}
\includegraphics[scale=1]{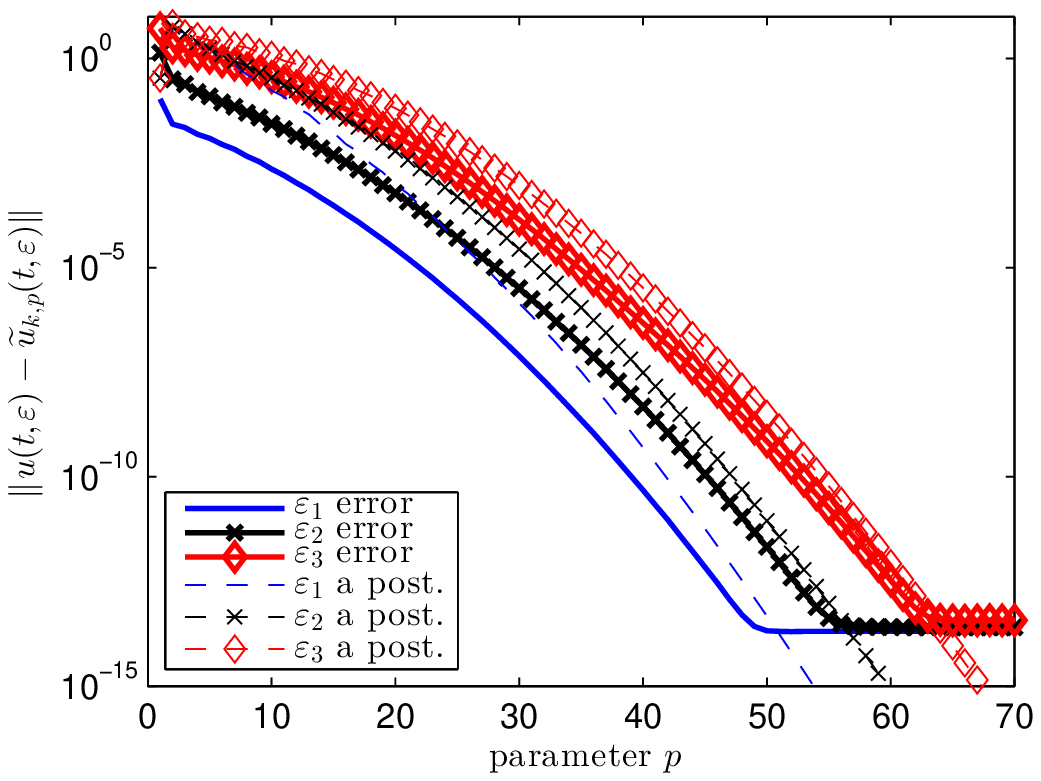}%
\end{center}
\caption{2-norm errors of approximations $\widetilde{u}_{k,p}(t,\eps)$ for the equation \eqref{eq:advdiff2}
 and the error estimate \eqref{eq:post_estimate} when $\eps$ has the values
$\eps_1 = 1\cdot 10^{-3}$, $\eps_2 = 1.5 \cdot 10^{-2}$ and $\eps_3 =  3 \cdot 10^{-2}$.}
\label{fig:figure2}
\end{figure}

\begin{figure}[h!]
\begin{center}
\includegraphics[scale=1]{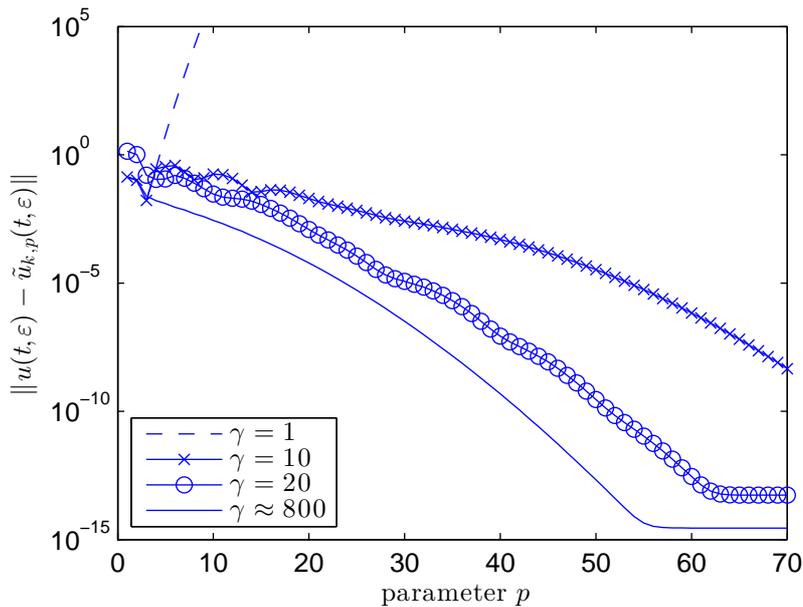}%
\end{center}
\caption{2-norm errors of approximations $\widetilde{u}_{k,p}(t,\eps)$, when $\eps = 1.5 \cdot 10^{-2}$
using different scalings \eqref{eq:scaling}. The last option corresponds to the scaling \eqref{eq:heuristic_gamma}.}
\label{fig:figure3}
\end{figure}


\subsection{Wave equation}

Consider next the damped wave equation inside the 3D unit box given in
\cite[Section 5.2]{Lietaert:2015:INTERPOLATORY}.
The governing $2n$-dimensional first-order differential equation is given by
\begin{equation} \label{eq:state_space_ode}
\frac{\dd}{\dd t} \begin{bmatrix} u(t) \\ u'(t) \end{bmatrix}
\begin{bmatrix} 0 & I \\ -M^{-1} K & - M^{-1} C(\gamma) \end{bmatrix}
\begin{bmatrix} u(t) \\ u'(t) \end{bmatrix}, \quad \begin{bmatrix} u(0) \\ u'(0) \end{bmatrix} =
\begin{bmatrix} u_0 \\ u_0' \end{bmatrix} \in \mathbb{R}^{2n},
\end{equation}
where $C(\gamma_1,\gamma_2)  = \gamma_1 C_1 + \gamma_2 C_2.$
The model is obtained by finite differences  with
15 discretization points in each dimension, i.e., $n=15^3$.
The matrix $K$ denotes the discretized Laplacian,
$C(\gamma_1,\gamma_2)$ the damping matrix stemming from Robin boundary conditions, and $M$
the mass matrix. We carry out numerical experiments for parameter values
$\gamma_1 = 0,1,2$ and $\gamma_2 = 0,1,2$.


We reformulate \eqref{eq:state_space_ode} in the form \eqref{eq:IVP} by setting
\[
A_0 = \begin{bmatrix}  0 & I \\ -M^{-1} K & - M^{-1} \gamma_1 C_1 \end{bmatrix}, \quad
A_1 = \begin{bmatrix} 0 & 0 \\ 0  & - M^{-1} C_2 \end{bmatrix}.
\]
Then, the variable $\veps$ in \eqref{eq:IVP} corresponds to $\gamma_2$.
This means that by running the algorithm for a fixed value
of $\gamma_1$, we may efficiently obtain solutions for different values of $t$ and $\gamma_2$.

Figures~\ref{fig:figure_array} show the contour plots of the numerical solutions  of \eqref{eq:state_space_ode}  at $t=9$
on the plane $\{ (x,y,z) \in [0,1]^3 \, : \, z = 0.5\}$ for
different values of $(\gamma_1,\gamma_2)$.
Note that for a fixed value of $\gamma_1$, only one run of the
algorithm is required to compute the solution for many different $\gamma_2$.

In Figure~\ref{fig:figure4} we illustrate the relative 2-norm errors of the approximations,
when $\gamma_1 = 2$ and $\gamma_2 = 0,1$ and $2$. Then, $\norm{t A_0} \approx 108$, and, respectively,
$\norm{t \gamma_2 A_1} \approx 0, 9.6$ and $12.9$.
We again observe superlinear convergence, and, moreover,
the a posteriori error estimate is very accurate for this example.

\begin{figure}[!h]
\centering
\begin{varwidth}{2.0\linewidth}  
\subfigure[$\gamma_1=\gamma_2=0$]{\includegraphics[width=4.25cm]{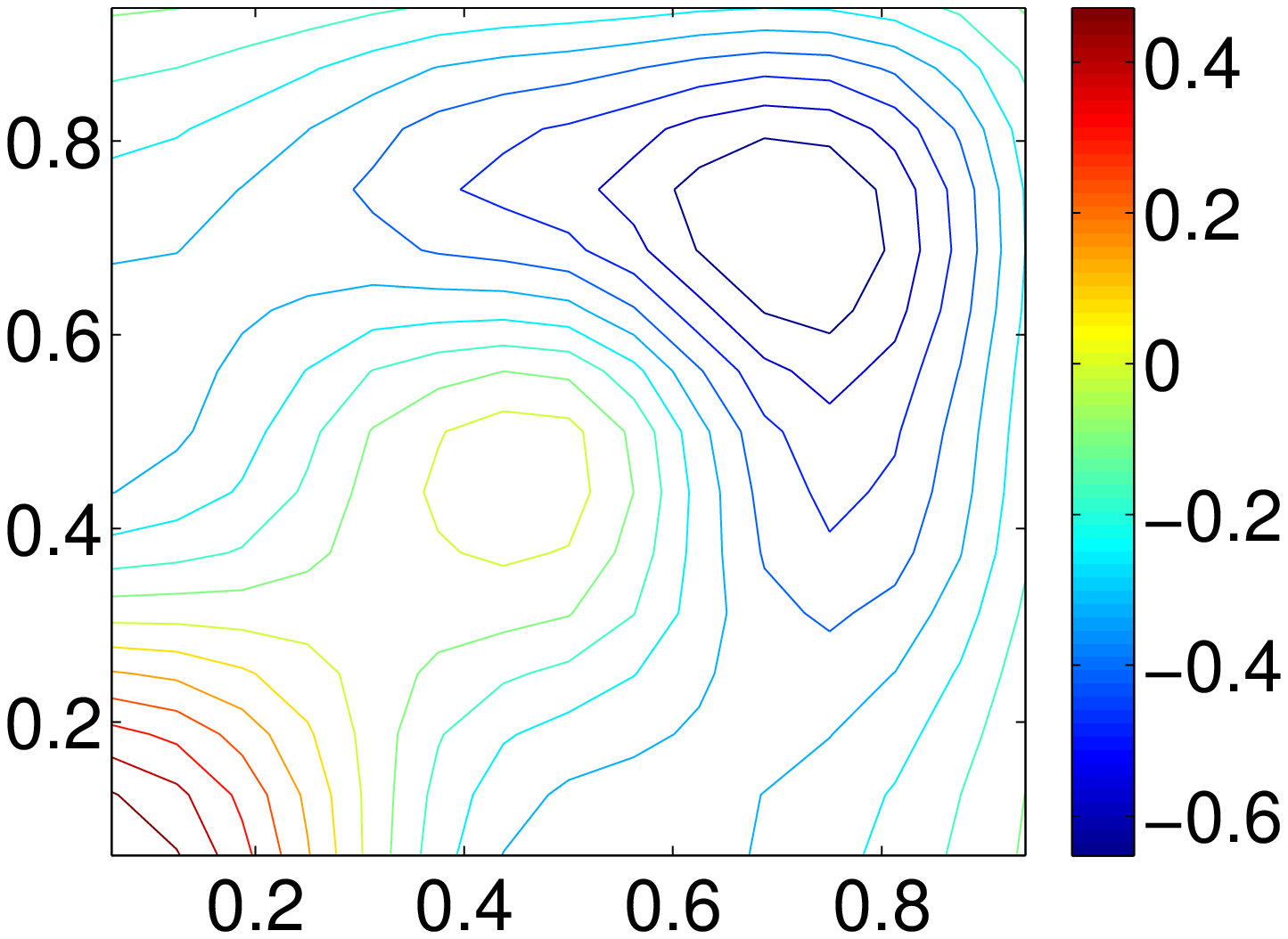}}
\subfigure[$\gamma_1=1$, $\gamma_2= 0$]{\includegraphics[width=4.25cm]{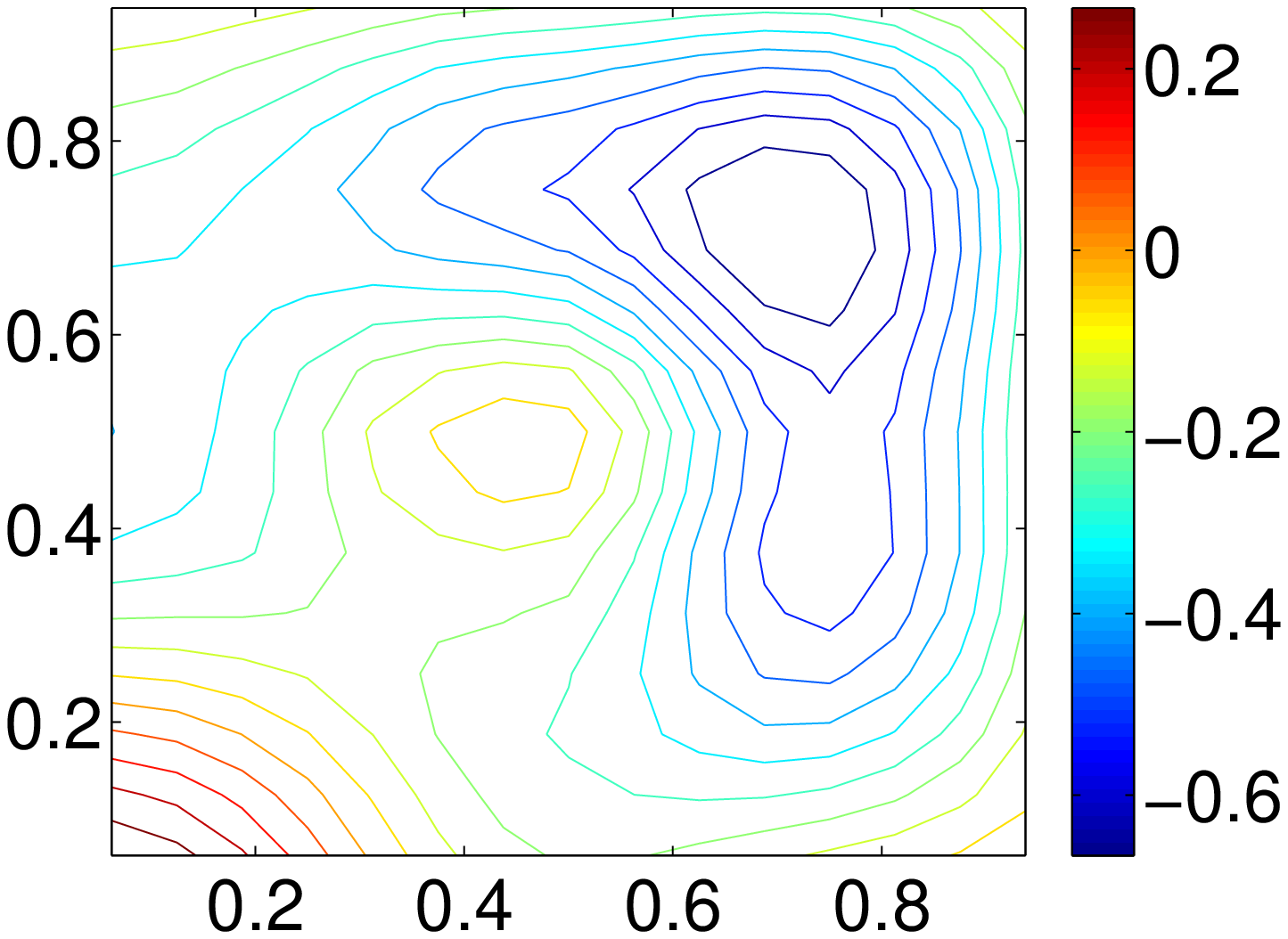}}
\subfigure[$\gamma_1 = 2$, $\gamma_2 =  0$]{\includegraphics[width=4.25cm]{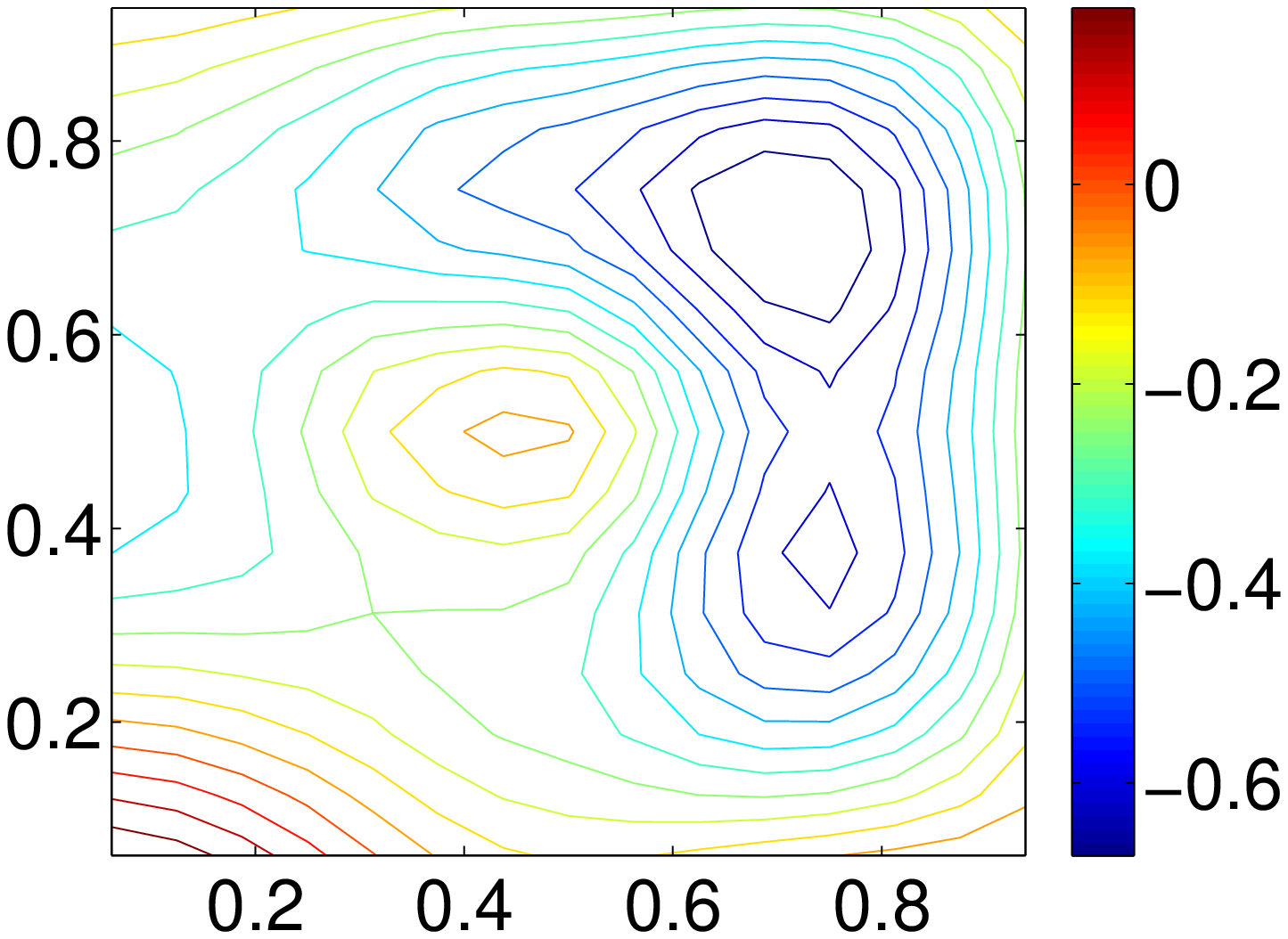}} \\
\subfigure[$\gamma_1=1$, $\gamma_2=1$]{\includegraphics[width=4.25cm]{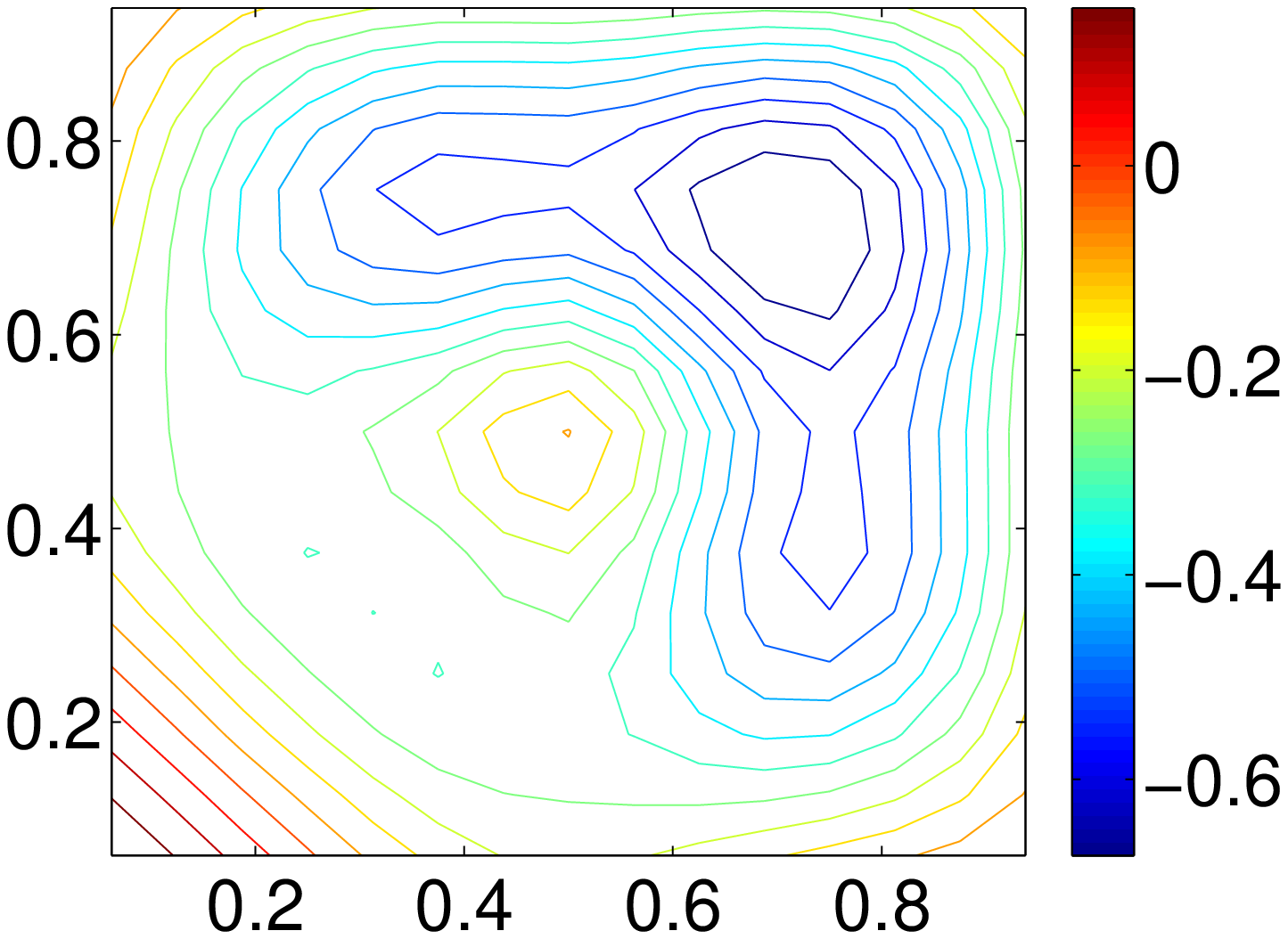}}
\subfigure[$\gamma_1= 1$, $\gamma_2=2$]{\includegraphics[width=4.25cm]{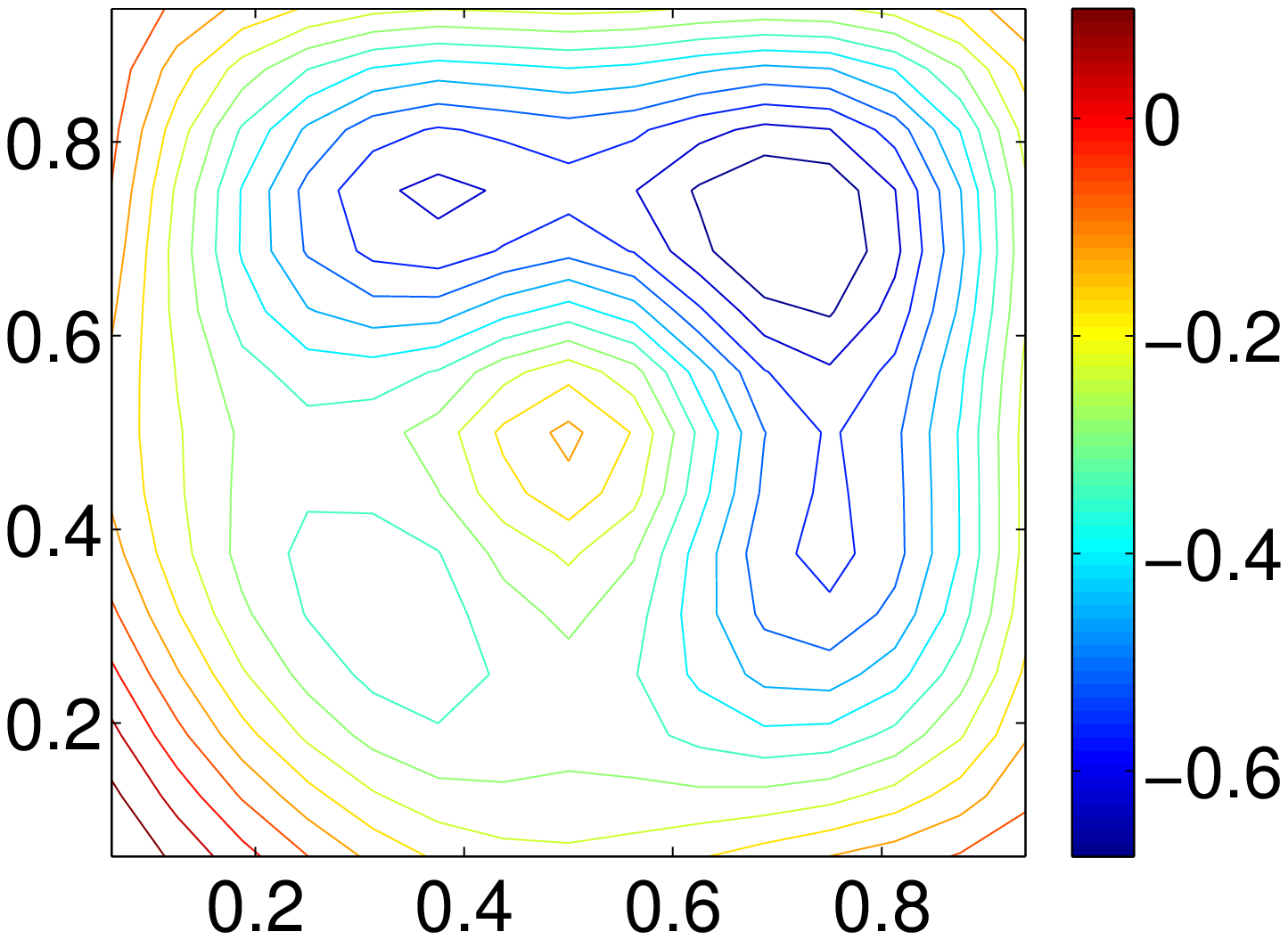}}
\subfigure[$\gamma_1=2$, $\gamma_2=2$]{\includegraphics[width=4.25cm]{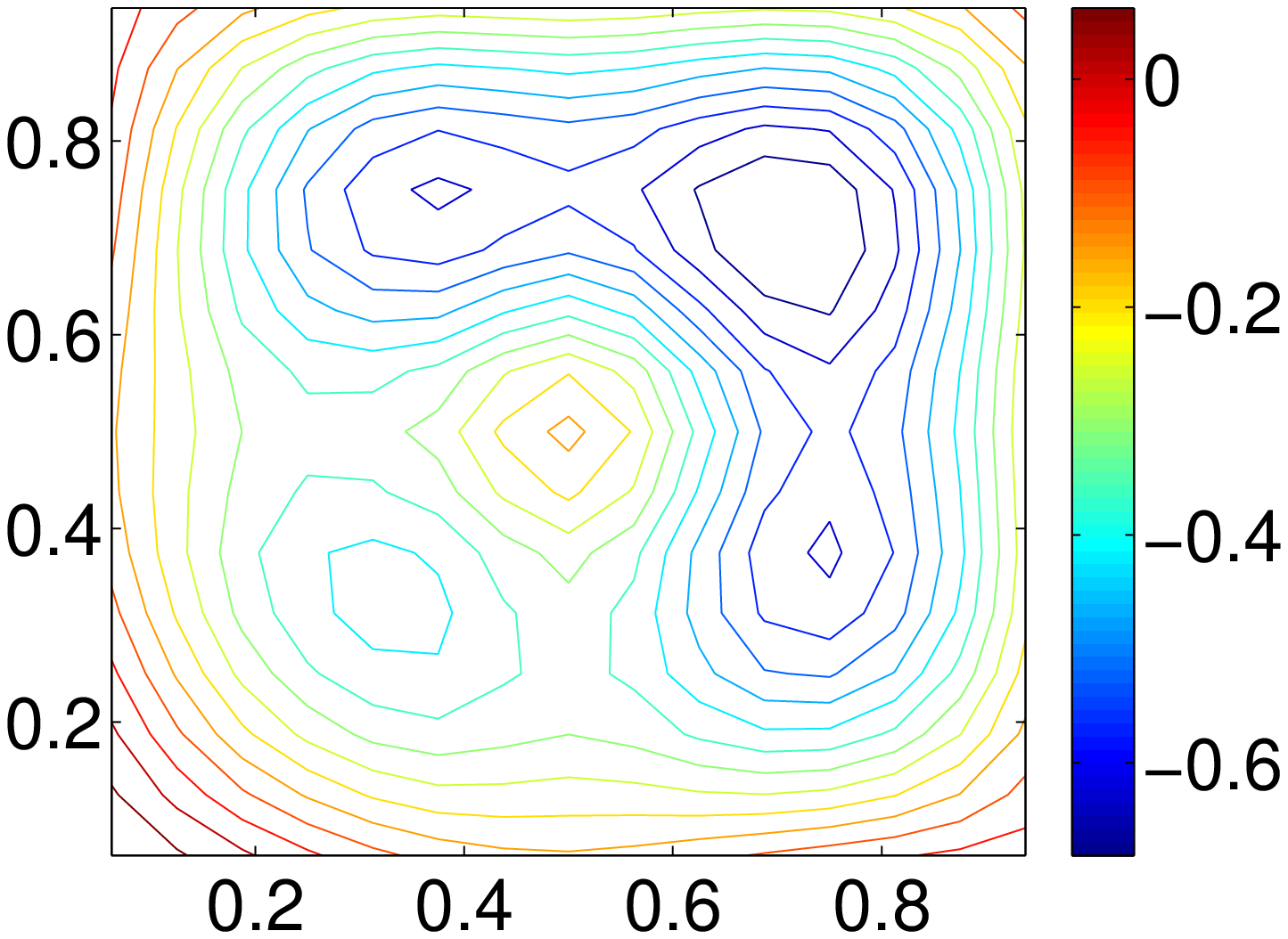}}
\end{varwidth}

\caption{The solution to \eqref{eq:state_space_ode}
in the plane $z=0.5$, for different values of $(\gamma_1,\gamma_2)$ at $t=9$.}
\label{fig:figure_array}
\end{figure}

\begin{figure}[h!]
\begin{center}
\includegraphics[scale=1]{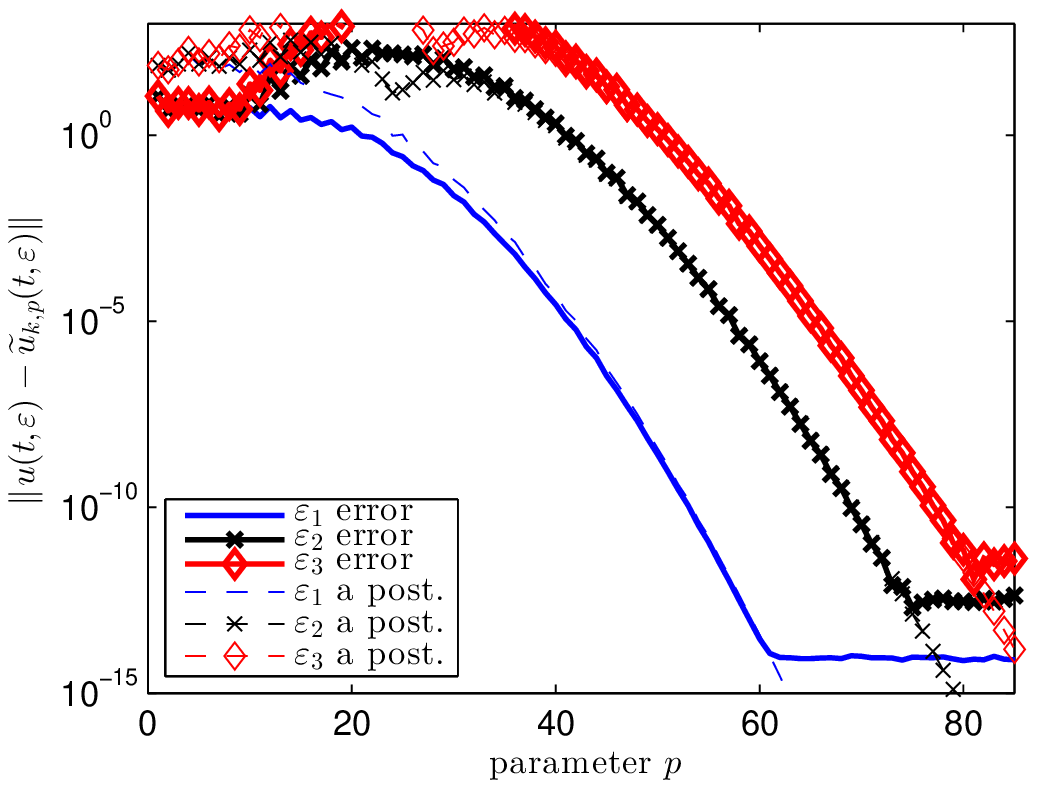}%
\end{center}
\caption{2-norm errors of approximations $\widetilde{u}_{k,p}(t,\eps)$ for the equation \eqref{eq:advdiff2}
 and the error estimate \eqref{eq:post_estimate}, when $\gamma_1=2$ and $\gamma_2$ has the values
$\veps_1 = 1$, $\veps_2 = 1.5$ and $\veps_3 =  2$.
}
\label{fig:figure4}
\end{figure}

\section{Conclusions and outlook}

The focus of this paper is an algorithm
for parameterized linear ODEs, which is shown to have
superlinear convergence in theory and perform
convincingly in several examples. The behavior
is consistent with what is expected from an Arnoldi method.
Due to the equivalence with the Arnoldi method, the algorithm
may suffer from the typical disadvantages of the Arnoldi method,
for instance, the fact that the computation time per iteration increases with
the iteration number. The standard approach to resolve this issue
is by using restarting, which we leave for future work.

\bibliographystyle{plain}
\bibliography{eliasbib}

\appendix

\section{Technical lemmas for the norm and the field of values of $L_m$}

We now provide bounds on the norm and the field of values of $L_m$,
which are needed in Section~\ref{sect:apriori_krylov}.
The derivation is done with properties of field of values.
Recall that the field of values of  a matrix $A \in \mathbb{C}^{n \times n}$
is defined as
\[
\mathcal{F}(A) = \{ x^* A x \, : \, x \in \mathbb{C}^n, \ \norm{x} = 1 \}.
\]

The bounds for the norm and the field of values of $\mathcal{A}_N$
follow from the block structure of $L_m $.
\begin{lem} \label{lem:norm_bound}
Let $N\geq 0$ and $L_m $ be given by \eqref{eq:L_N}. Then,
\[
\norm{L_m } \leq \sum\limits_{\ell=0}^N \norm{A_\ell}
\]
\end{lem}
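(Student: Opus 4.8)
The plan is to exhibit $L_m$ as a sum of Kronecker products and bound each summand by the triangle inequality. Recall from the proof of Lemma~\ref{lem:matvec} that, with $S\in\RR^{m\times m}$ the down-shift matrix $S=\sum_{\ell=1}^{m-1}e_{\ell+1}e_\ell^T$, the block-Toeplitz matrix $L_m$ admits the representation
\[
L_m=\sum_{\ell=0}^{N}S^\ell\otimes A_\ell,
\]
where $\widehat N=\min(m-1,N)$ so that in fact $S^\ell=0$ for $\ell\ge m$, and only the terms up to $\widehat N$ contribute. First I would invoke the triangle inequality for the operator norm to get $\norm{L_m}\le\sum_{\ell=0}^N\norm{S^\ell\otimes A_\ell}$.

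Next I would use the multiplicativity of the spectral norm under Kronecker products, $\norm{S^\ell\otimes A_\ell}=\norm{S^\ell}\,\norm{A_\ell}$, together with the elementary fact that $S$ (and hence each nonzero power $S^\ell$) is a partial isometry, so $\norm{S^\ell}\le 1$. (Concretely, $S^\ell$ has orthonormal columns among $e_{\ell+1},\dots,e_m$ and zeros elsewhere, hence $\norm{S^\ell}=1$ when $\ell\le m-1$ and $\norm{S^\ell}=0$ otherwise.) Combining these observations yields
\[
\norm{L_m}\le\sum_{\ell=0}^N\norm{S^\ell}\,\norm{A_\ell}\le\sum_{\ell=0}^N\norm{A_\ell},
\]
which is the claimed bound.

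I do not anticipate a genuine obstacle here; the only point requiring a little care is the bookkeeping when $m-1<N$, i.e.\ ensuring that the vanishing of $S^\ell$ for $\ell\ge m$ is consistent with the definition of $L_m$ via $\widehat N=\min(m-1,N)$ in \eqref{eq:L_N}. Since those extra terms are simply zero, they only help the inequality, and the stated bound with the full sum up to $N$ remains valid (if not tight) in that degenerate regime. An alternative, should one prefer to avoid the Kronecker formalism, is to note that $L_m$ is block banded with block bandwidth $N$ and that each block "diagonal" contributes a single $A_\ell$; a standard argument splitting $L_m=\sum_{\ell=0}^N L_m^{(\ell)}$ into its block diagonals, each of which has norm $\norm{A_\ell}$ (being block-diagonal after a shift), gives the same result. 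Either route is short and routine.
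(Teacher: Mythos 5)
Your proof is correct and follows essentially the same route as the paper: both decompose $L_m$ into its block diagonals $S^\ell\otimes A_\ell$, apply the triangle inequality, and bound each diagonal's contribution by $\norm{A_\ell}$ (the paper does this by directly estimating $\norm{L_m x}$ for a unit vector $x$, which is exactly the "alternative" block-diagonal argument you sketch at the end). The Kronecker-product phrasing with $\norm{S^\ell}\le 1$ is a clean, equivalent way to package the same estimate.
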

\begin{proof}
Let $x = [x_1^\mathrm{T} \ \ldots \ x_m^\mathrm{T}]^\mathrm{T} \in \mathbb{C}^{nm}$ such that
$x_i \in \mathbb{C}^n$ for all $1\leq i \leq m$ and $\norm{x} = 1$. From the block Toeplitz structure
of $L_m$ we see that
\[
\norm{L_m x} \leq \sum\limits_{\ell=0}^N \sqrt{ \sum\limits_{k=1}^{n-\ell} \norm{A_\ell x_k}^2 }
\leq \sum\limits_{\ell=0}^N \norm{A_\ell} \sqrt{ \sum\limits_{k=1}^{n} \norm{x_k}^2 } = \sum\limits_{\ell=0}^N \norm{A_\ell}.
\]
\end{proof}

Next, we give a bound for the field of values of the matrix $L_m $.
Let $d(\mathcal{S}, z)$ denote the distance between a closed set $\mathcal{S}$ and a point $z$.
\begin{lem} \label{lem:fov}
Let $N\geq 0$ and $L_m$ be given by \eqref{eq:L_N}. Then,
\[
\mathcal{F}(L_m ) \subset \{ z \in \mathbb{C} \, : \,
d(\mathcal{F}(A_0),z) \leq  \sum\limits_{\ell=1}^N \norm{A_\ell}  \}.
\]
\end{lem}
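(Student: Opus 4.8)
The plan is to decompose $L_m$ as $L_m = I_m \otimes A_0 + R_m$, where $R_m := \sum_{\ell=1}^N S^\ell \otimes A_\ell$ collects the strictly-lower block-triangular part ($S$ being the shift matrix from the proof of Lemma~\ref{lem:matvec}). For any unit vector $x = [x_1^{\mathrm T}\,\ldots\,x_m^{\mathrm T}]^{\mathrm T}\in\CC^{nm}$, one then writes $x^* L_m x = x^*(I_m\otimes A_0)x + x^* R_m x$, and the task reduces to showing that the first term lies in $\mathcal{F}(A_0)$ (more precisely, in its closed convex hull, which equals $\overline{\mathcal{F}(A_0)}$ since the field of values is convex) and that the second term has modulus at most $\sum_{\ell=1}^N\norm{A_\ell}$.

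For the first term, observe that $x^*(I_m\otimes A_0)x = \sum_{k=1}^m x_k^* A_0 x_k = \sum_{k=1}^m \norm{x_k}^2\,\big(\hat x_k^* A_0 \hat x_k\big)$, where $\hat x_k := x_k/\norm{x_k}$ when $x_k\neq 0$; since $\sum_k\norm{x_k}^2 = 1$, this is a convex combination of points $\hat x_k^* A_0\hat x_k\in\mathcal{F}(A_0)$, hence lies in $\mathcal{F}(A_0)$ by convexity. For the second term, use $|x^* R_m x| \le \sum_{\ell=1}^N |x^*(S^\ell\otimes A_\ell)x|$ and bound each summand by $\norm{S^\ell\otimes A_\ell}\le\norm{S^\ell}\,\norm{A_\ell}\le\norm{A_\ell}$, using $\norm{S}=1$ (alternatively, one can mimic the Cauchy--Schwarz computation in the proof of Lemma~\ref{lem:norm_bound} applied to $R_m$). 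Writing $z := x^* L_m x$ and $w := x^*(I_m\otimes A_0)x\in\mathcal{F}(A_0)$, we then have $|z - w| = |x^* R_m x| \le \sum_{\ell=1}^N\norm{A_\ell}$, so $d(\mathcal{F}(A_0),z)\le|z-w|\le\sum_{\ell=1}^N\norm{A_\ell}$, which is exactly the claimed inclusion.

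I expect no serious obstacle here; the only point requiring a little care is the convexity argument ensuring that $\sum_k\norm{x_k}^2(\hat x_k^* A_0\hat x_k)$ genuinely lands in $\mathcal{F}(A_0)$ rather than merely in its convex hull — this is handled by the Toeplitz--Hausdorff theorem, which guarantees $\mathcal{F}(A_0)$ is convex (and one should perhaps state the inclusion with $\overline{\mathcal{F}(A_0)}$ if $m$ is infinite or to be safe about closedness, consistent with how Lemma~\ref{lem:fov} will be used together with the bound $\mu(A_0)\le\max\{\operatorname{Re} z : z\in\mathcal{F}(A_0)\}$ in Section~\ref{sect:apriori_krylov}). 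The bound on $R_m$ is entirely routine, essentially a repackaging of Lemma~\ref{lem:norm_bound} restricted to the off-diagonal blocks.
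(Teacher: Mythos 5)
Your proposal is correct and follows essentially the same route as the paper's proof: the same splitting of $x^*L_mx$ into the block-diagonal part (handled by convexity of $\mathcal{F}(A_0)$) and the strictly lower block-triangular remainder (bounded in modulus by $\sum_{\ell=1}^N\norm{A_\ell}$, whether via $\norm{S^\ell\otimes A_\ell}\le\norm{A_\ell}$ or via the Cauchy--Schwarz computation of Lemma~\ref{lem:norm_bound}). Your worry about closedness is moot here since $\mathcal{F}(A_0)$ is compact for a finite matrix, so no passage to the closure is needed.
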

\begin{proof}
Let $x = [x_1^\mathrm{T} \ \ldots \ x_m^\mathrm{T}]^\mathrm{T} \in \mathbb{C}^{nm}$,
where $x_i \in \mathbb{C}^n$ for all $1\leq i \leq m$ and $\norm{x} = 1$. Then
\begin{equation} \label{eq:splitted}
x^* L_m  x = \sum\limits_{\ell = 1}^m x_\ell^* A_0 x_\ell +
x^* \widetilde{L}_m  x,
\end{equation}
where $\widetilde{L}_m $ equals $L_m $ on the subdiagonal blocks and
is otherwise zero.
By the convexity of the field of values \cite[Property~1.2.2]{Horn:1991:MATAN2}, we know that
$\sum_{\ell = 1}^m x_\ell^* A_0 x_\ell \in \mathcal{F}(A_0)$.
The second term in \eqref{eq:splitted} can be bounded as in proof of Lemma~\ref{lem:norm_bound},
giving
\[
|x^* \widetilde{L}_m  x| \leq \norm{\widetilde{L}_m }
\leq \sum\limits_{\ell=1}^N \norm{A_\ell}.
\]
\end{proof}

As a corollary of Lemma~\ref{lem:fov}, we have
 the following bound, which follows directly from the fact that the logarithmic norm of a matrix
in 2-norm equals the real part of the rightmost point in its field of values.
\begin{cor} \label{cor:log_norm_bound}
Let $L_m $ be given by \eqref{eq:L_N}. Then,
\begin{equation} \label{eq:log_norm_bound}
\mu(L_m) \leq \mu(A_0) + \sum\limits_{\ell=1}^N \norm{A_\ell},
\end{equation}
where $\mu(A)$ denotes the logarithmic norm of $A$ defined
in \eqref{eq:lognorm}.
\end{cor}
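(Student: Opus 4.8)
The plan is to obtain the corollary directly from Lemma~\ref{lem:fov} together with the standard Euclidean characterization of the logarithmic norm in terms of the field of values. First I would recall that for any $B \in \CC^{n\times n}$,
\[
\mu(B) = \max_{\lambda \in \Lambda((B+B^*)/2)} \lambda = \max\{\,\mathrm{Re}(z) \, : \, z \in \mathcal{F}(B)\,\},
\]
the second equality holding because $\mathrm{Re}(x^* B x) = x^* \tfrac{B+B^*}{2} x$ for every unit vector $x$, so the real parts of the points of $\mathcal{F}(B)$ are exactly the Rayleigh quotients of the Hermitian matrix $(B+B^*)/2$, whose range is the closed interval between its extreme eigenvalues. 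Since $\mathcal{F}(B)$ is compact, this maximum is attained.

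Next I would invoke Lemma~\ref{lem:fov}, which gives $\mathcal{F}(L_m) \subset \{\, z \in \CC : d(\mathcal{F}(A_0), z) \le r \,\}$ with $r := \sum_{\ell=1}^N \norm{A_\ell}$. Fix any $z \in \mathcal{F}(L_m)$. Because $\mathcal{F}(A_0)$ is compact, there is $w \in \mathcal{F}(A_0)$ with $|z - w| \le r$, whence
\[
\mathrm{Re}(z) \le \mathrm{Re}(w) + |z - w| \le \mu(A_0) + r,
\]
using the characterization above for the bound $\mathrm{Re}(w) \le \mu(A_0)$. Taking the maximum over $z \in \mathcal{F}(L_m)$ and applying the characterization once more, now to $L_m$, yields $\mu(L_m) \le \mu(A_0) + \sum_{\ell=1}^N \norm{A_\ell}$, which is exactly \eqref{eq:log_norm_bound}.

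There is essentially no hard step here; the substance is entirely contained in Lemma~\ref{lem:fov}. The only points requiring a little care are the identity $\mu(B) = \max\{\mathrm{Re}(z) : z\in\mathcal{F}(B)\}$ and the use of compactness of $\mathcal{F}(A_0)$ to pass from the distance bound to an actual nearest point; both are standard facts about fields of values (see, e.g., \cite{Horn:1991:MATAN2}). Alternatively one may avoid nearest points: for every $z\in\mathcal{F}(L_m)$ and every $\delta>0$ there is $w\in\mathcal{F}(A_0)$ with $|z-w|<r+\delta$, giving $\mathrm{Re}(z)<\mu(A_0)+r+\delta$, and then let $\delta\to 0$.
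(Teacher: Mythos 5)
Your argument is correct and is exactly the route the paper takes: Corollary~\ref{cor:log_norm_bound} is deduced from Lemma~\ref{lem:fov} via the identity $\mu(B)=\max\{\mathrm{Re}(z):z\in\mathcal{F}(B)\}$, which the paper invokes without proof and you simply spell out. No substantive difference.
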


\section{Technical lemmas for coefficient bounds}
We now derive bounds needed for the a priori
analysis of the truncation error in Section~\ref{sect:apriori_taylor}.
The following result provides an explicit characterization of the expansion
 coefficients. The proof technique is based on the same
type of reasoning as what is commonly
 used in the analysis of Magnus series expansions
for time-dependent ODEs; see, e.g., \cite{Blanes:2009:MAGNUS}.
\begin{lem}[Explicit integral form] \label{lem:c_int_representation}
Let $\ell$ and $N$ be positive integers such that $N\leq \ell$.
Denote by $C_\ell$ the set of compositions of $\ell$, i.e.,
\begin{equation}\label{eq:Cell_def}
C_\ell = \{ (i_1,\ldots,i_r) \in \NN_+^r \, : \, i_1 + \cdots + i_r = \ell \},
\end{equation}
and further denote
\begin{equation} \label{eq:C_m_N}
C_{\ell,N} := \{ (i_1,\ldots,i_r) \in C_\ell \, : \, i_s \leq N \,\,\, \textrm{for all} \,\,\,  1\leq s \leq r \}.
\end{equation}
Then,
\begin{equation} \label{eq:c_sum_representation}
\begin{aligned}
c_0(t) & = \ee^{t A_0} u_0, \\
c_\ell(t) & =  \sum\limits_{(i_1,\ldots,i_r) \in C_{\ell,N}}
 \int\limits_0^t \ee^{(t-t_{i_1})A_0} A_{i_1} \int\limits_0^{t_{i_1}} \ee^{(t_{i_1}-t_{i_2})A_0} A_{i_2}\\
 & \qquad \dots \int\limits_0^{t_{i_{r-1}}} \ee^{(t_{i_{r-1}}-t_{i_r})A_0} A_{i_r} c_0(t_{i_r}) \, \dd t_{i_1} \ldots \dd t_{i_r}
\quad \textrm{for} \quad \ell>0.
\end{aligned}
\end{equation}
\end{lem}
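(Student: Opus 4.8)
The plan is to derive \eqref{eq:c_sum_representation} by first rewriting the linear ODE system \eqref{eq:c_ode} for the coefficients in integral (Duhamel/variation-of-constants) form and then solving it recursively. Recall from \eqref{eq:c_ode} that
\[
  c_\ell'(t) = A_0 c_\ell(t) + \sum_{i=1}^{\min(N,\ell)} A_i c_{\ell-i}(t),
  \qquad c_\ell(0) = 0 \ \ (\ell>0),\quad c_0(0)=u_0.
\]
The first step is to treat the $i=0$ term as the homogeneous part and the remaining sum as an inhomogeneity. Variation of constants then gives
\[
  c_\ell(t) = \int_0^t \ee^{(t-s)A_0} \sum_{i=1}^{\min(N,\ell)} A_i\, c_{\ell-i}(s)\,\dd s,
  \qquad c_0(t) = \ee^{tA_0}u_0 .
\]
This is the base of an induction on $\ell$; the case $\ell=0$ is immediate and the formula for $c_\ell$ with $\ell>0$ is exactly a one-step expansion of the claimed multiple integral.

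Next I would unwind the recursion. Substituting the integral representation of each $c_{\ell-i}(s)$ into that of $c_\ell(t)$ and iterating, one sees that every term produced is indexed by a finite sequence $(i_1,i_2,\ldots,i_r)$ of positive integers, where $i_1$ is the index picked at the outermost integral, $i_2$ at the next, and so on, with each $i_s \le N$ (since at each stage the inner sum runs only over $1\le i\le N$, once $\ell$ is large enough, and is automatically cut at $\min(N,\cdot)$ otherwise), and the recursion terminates precisely when the remaining lower index reaches $0$, i.e.\ when $i_1+\cdots+i_r=\ell$. Thus the terms are in bijection with $C_{\ell,N}$ as defined in \eqref{eq:C_m_N}, and each term is the $r$-fold nested integral displayed in \eqref{eq:c_sum_representation}, with innermost factor $c_0(t_{i_r})=\ee^{t_{i_r}A_0}u_0$. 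Making this bookkeeping precise is best done by a clean induction: assume the formula holds for all indices $<\ell$, plug into the Duhamel identity, and observe that prepending $i$ to a composition of $\ell-i$ in $C_{\ell-i,N}$ yields exactly the compositions of $\ell$ in $C_{\ell,N}$ whose first part is $i$, and that prepending $\int_0^t \ee^{(t-t_i)A_0}A_i\,(\cdot)\,\dd t_i$ to the corresponding nested integral produces the term indexed by the longer composition.

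The routine parts here are the convergence/interchange-of-sum-and-integral justifications (the series for $\exp$ converges absolutely and uniformly on compacts, so differentiating \eqref{eq:taylor_series} termwise and swapping sums as in \eqref{eq:c_ode} is legitimate) and checking the initial conditions; these I would not belabor. The only genuinely delicate point is the combinatorial identification of the expansion terms with $C_{\ell,N}$ together with the requirement $N\le\ell$ in the hypothesis, which guarantees $C_{\ell,N}$ is nonempty and that the recursion bottoms out at $c_0$ rather than leaving a leftover positive index; I expect the bookkeeping of which composition corresponds to which nesting of integrals to be the main thing that needs care, and I would handle it by the induction on $\ell$ sketched above rather than by an explicit term-by-term argument.
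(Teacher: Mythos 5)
Your proposal is correct and follows essentially the same route as the paper's proof: the variation-of-constants recursion $c_\ell(t)=\sum_{k=1}^{\min\{N,\ell\}}\int_0^t \ee^{(t-s)A_0}A_k c_{\ell-k}(s)\,\dd s$ followed by induction on $\ell$, identifying the terms via the bijection that prepends a first part $k\le N$ to a composition in $C_{\ell-k,N}$ (the paper's recurrence \eqref{eq:CellN}). The only quibble is your side remark that $N\le\ell$ is needed for $C_{\ell,N}$ to be nonempty: $(1,\dots,1)$ always lies in $C_{\ell,N}$, so the formula and the induction in fact go through for all $\ell\ge 1$.
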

\begin{proof}
From the ODE \eqref{eq:c_ode} and the variation-of-constants formula it follows that
\begin{subequations}\label{eq:c_l_recursion}
\begin{eqnarray}
  c_0(t) &=& \ee^{tA_0}u_0\label{eq:c_l_recursion_0},\\
  c_\ell(t) &=&\sum\limits_{k=1}^{\min\{N,\ell\}} \int\limits_0^t \ee^{(t-s)A_0} A_k c_{\ell-k}(s) \, \dd s
\qquad \textrm{for}  \quad \ell > 0.\label{eq:c_l_recursion_ell}
\end{eqnarray}
\end{subequations}
Using \eqref{eq:c_l_recursion} we now prove \eqref{eq:c_sum_representation}
by induction. For $\ell=1$, we have $C_1=\{(1)\}$ and $C_{1,1}=\{(1)\}$.
From \eqref{eq:c_l_recursion_0} and \eqref{eq:c_l_recursion_ell}
we directly conclude that
\[
  c_1(t)=\int_{0}^t\ee^{(t-t_1)A_0}A_1c_0(t_1)\,\dd t_1.
\]
Suppose \eqref{eq:c_sum_representation} holds
for  $\ell=1,\ldots,p-1$ for some value $p>1$. From Definition~\eqref{eq:Cell_def},
we know that the row sum of any element of $C_{p}$ is $p$,
and the row sum of any element in $C_{p-k}$ is $p-k$.
Therefore,  $C_p$ satisfies the recurrence relation
\begin{equation}\label{eq:Cell}
  C_{p}=\bigcup_{k=1}^p \;
\bigcup_{(i_1,\ldots,i_r)\in C_{p-k}}\;\;(k,i_1,\ldots,i_r)
\end{equation}
and
\begin{equation}\label{eq:CellN}
  C_{p,N}=\bigcup_{k=1}^{\min(N,p)} \;
\bigcup_{(i_1,\ldots,i_r)\in C_{p-k,N}}\;\;(k,i_1,\ldots,i_r).
\end{equation}

%
%
By using \eqref{eq:c_l_recursion_ell} with $\ell=p$ and the fact that
\eqref{eq:c_sum_representation} is assumed to be satisfied for $\ell=1,\ldots,p-1$ we have
\begin{equation*}
\begin{aligned}
  c_p(t) =  &
\sum\limits_{k=1}^{\min\{N,p\}} \int\limits_0^t \ee^{(t-s)A_0} A_k c_{p-k}(s) \, \dd s \\
&= \sum\limits_{k=1}^{\min\{N,p\}} \int\limits_0^t \ee^{(t-s)A_0} A_k
\sum\limits_{(i_1,\ldots,i_r) \in C_{p-k,N}}
 \int\limits_0^s \ee^{(s-t_{i_1})A_0} A_{i_1} \dots \\
& \qquad \int\limits_0^{t_{i_{r-1}}} \ee^{(t_{i_{r-1}}-t_{i_r})A_0} A_{i_r} c_0(t_{i_r})\dd t_{i_1} \ldots \dd t_{i_r}\dd s.
\end{aligned}
\end{equation*}
By rearranging the terms as a double sum and using \eqref{eq:CellN}, we have
\begin{equation*}
\begin{aligned}
c_p(t) &= \sum\limits_{k=1}^{\min\{N,p\}}\sum\limits_{(i_1,\ldots,i_r) \in C_{p-k,N}}
\int\limits_0^t \ee^{(t-s)A_0} A_k
 \int\limits_0^s \ee^{(s-t_{i_1})A_0} A_{i_1} \dots \\
& \quad \quad \int\limits_0^{t_{i_{r-1}}}
\ee^{(t_{i_{r-1}}-t_{i_r})A_0} A_{i_r} c_0(t_{i_r})\dd t_{i_1} \ldots \dd t_{i_r}\dd s \\
 &=
\sum\limits_{(i_1,\ldots,i_r) \in C_{p,N}}
 \int\limits_0^t \ee^{(t-t_{i_1})A_0} A_{i_1} \dots \int\limits_0^{t_{i_{r-1}}}
\ee^{(t_{i_{r-1}}-t_{i_r})A_0} A_{i_r} c_0(t_{i_r})\dd t_{i_1} \ldots \dd t_{i_r}
\end{aligned}
\end{equation*}
which shows that \eqref{eq:c_sum_representation} holds for $\ell=p$ and completes the proof.

\end{proof}

\begin{lem} \label{lem:c_term_bound}
Let $m,N$ be positive integers, $N\leq m$, and let $C_{m,N}$ be defined as in \eqref{eq:C_m_N}.
Let $(i_1,i_2,\ldots,i_r) \in C_{m,N}$,
$a = \max_{j = 1, \dots, N} \norm{A_j}$,
and assume that for all $1 \leq j \leq N $.
Then, one corresponding term in \eqref{eq:c_sum_representation} is bounded as
\begin{equation} \label{eq:int_bound}
\begin{aligned}
& \quad \norm{
 \int\limits_0^t \ee^{(t-t_{i_1})A_0} A_{i_1} 
\ldots \int\limits_0^{t_{i_{r-1}}} \ee^{(t_{i_{r-1}}-t_{i_r})A_0} A_{i_r} c_0(t_{i_r}) \, \dd t_{i_1} \ldots \dd t_{i_r}
} \\
& \qquad \leq  \ee^{t \mu(A_0)} \frac{(ta)^r}{r!} \norm{u_0}.
\end{aligned}
\end{equation}
\end{lem}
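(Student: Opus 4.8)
The plan is to bound the nested integral directly, term by term, using the submultiplicativity of the norm together with the standard bound $\norm{\ee^{sA_0}}\le\ee^{s\mu(A_0)}$ for $s\ge 0$, which is the defining property of the logarithmic norm in \eqref{eq:lognorm}. First I would split the matrix exponentials out of the integrand: since each factor $\ee^{(t_{i_{j-1}}-t_{i_j})A_0}$ has a nonnegative exponent on the simplex $0\le t_{i_r}\le\cdots\le t_{i_1}\le t$, its norm is at most $\ee^{(t_{i_{j-1}}-t_{i_j})\mu(A_0)}$, and the product of all these factors telescopes to $\ee^{t\mu(A_0)}$ (writing $t_{i_0}:=t$). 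Likewise $\norm{c_0(t_{i_r})}=\norm{\ee^{t_{i_r}A_0}u_0}\le\ee^{t_{i_r}\mu(A_0)}\norm{u_0}$, but it is cleaner to absorb this into the telescoping so that the combined exponential contribution is exactly $\ee^{t\mu(A_0)}\norm{u_0}$.

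Next I would bound each coefficient matrix: $\norm{A_{i_j}}\le a$ since $i_j\le N$ and $a=\max_{1\le j\le N}\norm{A_j}$ (this is where the hypothesis that $(i_1,\dots,i_r)\in C_{m,N}$, hence every $i_j\le N$, is used — the dangling ``assume that for all $1\le j\le N$'' in the statement evidently means $a=\max_{j}\norm{A_j}$ with $i_j\le N$). Pulling these $r$ factors out contributes $a^r$. What remains is the volume of the integration region, namely the iterated integral $\int_0^t\int_0^{t_{i_1}}\cdots\int_0^{t_{i_{r-1}}}\dd t_{i_r}\cdots\dd t_{i_1}$, which is the standard ordered simplex $\{0\le t_{i_r}\le\cdots\le t_{i_1}\le t\}$ of volume $t^r/r!$. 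Multiplying the three pieces — $\ee^{t\mu(A_0)}\norm{u_0}$, $a^r$, and $t^r/r!$ — gives exactly the claimed bound $\ee^{t\mu(A_0)}\frac{(ta)^r}{r!}\norm{u_0}$.

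There is essentially no serious obstacle here; the only point requiring a little care is the telescoping of the exponential factors, since one must verify that the exponents in $\ee^{(t-t_{i_1})A_0}\ee^{(t_{i_1}-t_{i_2})A_0}\cdots\ee^{(t_{i_{r-1}}-t_{i_r})A_0}\ee^{t_{i_r}A_0}$ are all nonnegative on the relevant simplex so that each norm bound $\norm{\ee^{sA_0}}\le\ee^{s\mu(A_0)}$ applies, and then that the sum of the exponents is $t$. Both are immediate from the ordering $0\le t_{i_r}\le t_{i_{r-1}}\le\cdots\le t_{i_1}\le t$ enforced by the nested integration limits. If one prefers not to telescope, an alternative is to bound the norm of the whole integrand pointwise by $\ee^{t\mu(A_0)}a^r\norm{u_0}$ (using $\norm{\ee^{(t_{i_{j-1}}-t_{i_j})A_0}}\le\ee^{(t_{i_{j-1}}-t_{i_j})\mu(A_0)}$ and then bounding each nonnegative exponent increment crudely, which still sums to $t$), and then integrate the constant over the simplex; this avoids even the mild bookkeeping. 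Either way the proof is a two-line estimate once the simplex volume $t^r/r!$ is invoked.
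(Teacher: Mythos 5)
Your proposal is correct and is essentially the paper's own argument: both rest on the Dahlquist bound $\norm{\ee^{sA_0}}\le\ee^{s\mu(A_0)}$, the estimate $\norm{A_{i_j}}\le a$, and the factor $t^r/r!$ from the ordered simplex, the only cosmetic difference being that the paper obtains the telescoped exponential and the $t^r/r!$ by iterating the bound from the innermost integral outward rather than pulling everything out at once. You also correctly diagnose the garbled hypothesis in the statement as a typo for the definition of $a$.
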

\begin{proof}
By using the Dahlquist bound \cite[p.~138]{Trefethen:2005:PSEUDOSPECTRA}
for the matrix exponential, the rightmost integral on the left-hand
side of \eqref{eq:int_bound} can be bounded as
\begin{equation*}
\begin{aligned}
&  \norm{\int\limits_0^{t_{i_{r-1}} }\ee^{(t_{i_{r-1}}-t_{i_r}) A_0} A_{i_r} \ee^{t_{i_r} A_0} u_0 \, \dd t_{i_r}} \\
\leq &\int\limits_0^{t_{i_{r-1}}}  \ee^{(t_{i_{r-1}}-t_{i_r}) \mu(A_0)} \norm{A_{i_r}} \ee^{{t_{i_r}} \mu(A_0)} \dd t_{i_r} \norm{u_0}
=   t_{i_{r-1}} a \ee^{t_{i_{r-1}} \mu(A_0)} \, \norm{u_0}.
\end{aligned}
\end{equation*}
The claim \eqref{eq:int_bound} follows
by applying the same bounding technique $r-1$ times for the remaining integrals,
and using that $t_i\le t$ for any $i$.
\end{proof}

%
\begin{lem}[Coefficient bound] \label{lem:c_bound}
Let $c_0$,$c_1$,\dots be  the $\varepsilon$-expansion of $u(t,\varepsilon)$
in \eqref{eq:taylor_series}
for $N\geq1$ in \eqref{eq:taylor_series},
and let $a = \max_{j = 1, \dots, N} \norm{A_j}$.
Then, for any $\ell\ge 0$ such that $k:=\lceil \frac{\ell}{N} \rceil\ge 2$,
\[
\norm{c_\ell(t)} \leq
\ee^{t (\mu(A_0) + \ee N a)-1}\frac{(\ee   N t a)^k }{(k-1)!} \norm{u_0}.
\]
\end{lem}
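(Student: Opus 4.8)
The plan is to combine the explicit integral representation from Lemma~\ref{lem:c_int_representation} with the term-wise bound from Lemma~\ref{lem:c_term_bound}, and then to count the number of terms in the composition set $C_{\ell,N}$ according to how many parts each composition has. First I would invoke Lemma~\ref{lem:c_int_representation} to write
\[
\norm{c_\ell(t)} \le \sum_{(i_1,\ldots,i_r)\in C_{\ell,N}}
\norm{\int_0^t \ee^{(t-t_{i_1})A_0}A_{i_1}\cdots \int_0^{t_{i_{r-1}}}\ee^{(t_{i_{r-1}}-t_{i_r})A_0}A_{i_r}c_0(t_{i_r})\,\dd t_{i_1}\ldots\dd t_{i_r}},
\]
and then apply Lemma~\ref{lem:c_term_bound} to each summand, which bounds the $r$-fold integral by $\ee^{t\mu(A_0)}(ta)^r\norm{u_0}/r!$. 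The bound depends on the composition only through its number of parts $r$, so the whole sum reduces to $\ee^{t\mu(A_0)}\norm{u_0}\sum_{r} \#\{(i_1,\ldots,i_r)\in C_{\ell,N}\}\,(ta)^r/r!$, where $r$ ranges over the possible lengths of a composition of $\ell$ into parts of size at most $N$; since each part is at least $1$ and at most $N$, we have $\lceil \ell/N\rceil \le r \le \ell$.

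Next I would bound the counting factor. A composition of $\ell$ into exactly $r$ positive parts corresponds to choosing $r-1$ of the $\ell-1$ "gaps", so $\#\{(i_1,\ldots,i_r)\in C_\ell\} = \binom{\ell-1}{r-1}$, and $C_{\ell,N}\subset C_\ell$ gives $\#\{(i_1,\ldots,i_r)\in C_{\ell,N}\}\le \binom{\ell-1}{r-1}\le \binom{\ell}{r}$. Hence
\[
\norm{c_\ell(t)} \le \ee^{t\mu(A_0)}\norm{u_0}\sum_{r=\lceil \ell/N\rceil}^{\ell}\binom{\ell}{r}\frac{(ta)^r}{r!}.
\]
Now I would estimate the sum. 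Using $\binom{\ell}{r}\le \ell^r/r!$ would give a sum of $(\ell ta)^r/(r!)^2$, which is in the right ballpark but I want the cleaner form stated in the lemma. A tidier route: write $\binom{\ell}{r}\le \binom{Nk}{r}$ (since $\ell\le Nk$ where $k=\lceil\ell/N\rceil$), and for the relevant range $r\ge k$ use $\binom{Nk}{r}\le 2^{Nk}$, or better, peel off the leading behaviour. Actually the crispest argument is: for each term, $\binom{\ell}{r}(ta)^r/r! \le (\ell ta)^r/(r!)^2$; summing over $r\ge k$ and using that $(r!)^2 \ge k!\,(r-k)!\,\cdot\,(\text{something})$ is awkward, so instead I would bound $\binom{\ell}{r}/r! \le \ell^r/(r!)^2$ and then split $\ell^r = \ell^k\ell^{r-k}$, $1/(r!)^2 \le 1/(k!\,(r-k)!)$ roughly, leading to $\sum_r (\ell ta)^r/(r!)^2 \le \frac{(\ell t a)^k}{(k!)}\cdot \frac{1}{k!}\sum_j \frac{(\ell t a)^j}{j!}\cdot(\text{const})$; comparing with the target $\ee^{-1}(\ee Nta)^k/(k-1)!$ forces one to absorb the $\ell^k\le (Nk)^k$ against $k!$ via $k^k/k! \le \ee^k/\ee$ (Stirling), turning $\ell^k/(k!)^2$ into $(\ee N)^k\,t^k a^k/(k!)\cdot\ee^{-1}\cdot(1/k!)\cdot k!$... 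The bookkeeping here is exactly where I would be most careful: the factor $\ee^{\ee N t a}$ in the final bound must come out of summing a tail $\sum_{j\ge 0}(\ee N t a)^j/j!$, and the $\ee N$ (rather than just $N$) inside the $k$th power comes from Stirling's inequality $k^k \le \ee^{k-1}k!$ applied to $\ell^k\le (Nk)^k = N^k k^k$.

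\textbf{Main obstacle.} The genuine content is not the representation (that is Lemma~\ref{lem:c_int_representation}) nor the single-term estimate (Lemma~\ref{lem:c_term_bound}), but the \emph{combinatorial tail estimate}: showing
\[
\sum_{r=k}^{\ell}\binom{\ell}{r}\frac{(ta)^r}{r!} \;\le\; \ee^{-1}\,\frac{(\ee N t a)^k}{(k-1)!}\,\ee^{\ee N t a}
\]
with the exact constants claimed. I expect the key inputs to be (i) $\binom{\ell}{r}\le \ell^r/r! \le (Nk)^r/r!$, (ii) the Stirling-type bound $j^j\le \ee^{j-1}j!$ to convert powers of $Nk$ into powers of $\ee N$ times factorials, and (iii) re-indexing $r = k+j$ so that the residual sum is recognizably $\sum_{j\ge 0}(\ee N t a)^j/j!=\ee^{\ee N t a}$, while the $r=k$ prefactor produces $(\ee N t a)^k/(k-1)!$ up to the $\ee^{-1}$. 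The hypothesis $k\ge 2$ is presumably used to guarantee $(k-1)!$ is well-behaved as a denominator after shifting one factor of $r$ out of $1/r!$, i.e. $1/r! \le 1/((k-1)!\,r\cdots)$ type manipulations; I would check that boundary case explicitly. Everything else is routine once these three ingredients are in place.
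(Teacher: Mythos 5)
Your plan follows the paper's proof essentially step for step: the integral representation of Lemma~\ref{lem:c_int_representation}, the term-wise bound of Lemma~\ref{lem:c_term_bound}, grouping the compositions in $C_{\ell,N}$ by their length $r\in\{k,\ldots,\ell\}$ and counting them via $\binom{\ell-1}{r-1}$, a Stirling-type inequality, and an exponential tail sum — so the approach is the right one and all the named ingredients are correct. The one place your sketch trails off, the tail estimate, closes cleanly if you apply Stirling term by term in $r$ rather than only at the exponent $k$: from $\binom{\ell}{r}\le \ell^r/r!$ and $\ell\le Nk\le Nr$ each summand satisfies $\ell^r(ta)^r/(r!)^2\le N^r r^r(ta)^r/(r!)^2\le \ee^{-1}(\ee Nta)^r/r!$, and then the tail bound \eqref{eq:exp_remainder_bound} gives $\ee^{-1}(\ee Nta)^k\ee^{\ee Nta}/k!$, which after multiplying by $\ee^{t\mu(A_0)}\norm{u_0}$ is in fact slightly stronger than the claim (with $k!$ in place of $(k-1)!$); the hypothesis $k\ge2$ is only needed so that the Stirling inequality and the $(k-1)!$ denominator are available, not for any delicate boundary manipulation. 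The paper's own proof is the same computation started from $\binom{m-1}{r-1}\le m^r/(r-1)!$, which costs one factor of $r$ and is why it needs the extra step $r!/(r+k-1)!\le 1/(k-1)!$ and ends with $(k-1)!$ rather than $k!$.
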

\begin{proof}
We first note that the maximum length of any
vector in $C_{m,N}$ is $m$, and
the vector with the shortest length has length at least $k=\lceil \frac{m}{N} \rceil$.
Hence, Lemma~\ref{lem:c_int_representation} can
be rephrased as
\begin{equation}\label{eq:c_separated_sum}
\begin{aligned}
c_m(t) & = \sum\limits_{\widehat{r}=k}^m \ \underset{r=\widehat{r}}{\sum\limits_{ (i_1,\ldots,i_r) \in C_{m,N} }}
\int\limits_0^t \ee^{(t-t_{i_1})A_0} A_{i_1} \\
 & \qquad \dots \int\limits_0^{t_{i_{r-1}}} \ee^{(t_{i_{r-1}}-t_{i_r})A_0} A_{i_r} c_0(t) \, \dd t_{i_1} \ldots \dd t_{i_r}.
\end{aligned}
\end{equation}
Since, $C_{m,N}\subset C_m$ we can bound
the number of elements in $C_{m,N}$
\[
\# \{ (i_1,\ldots,i_r) \in C_{m,N} \, : \, r=\ell \} \leq \# \{ (i_1,\ldots,i_r) \in C_m \, : \, r=\ell \} = \binom{m-1}{\ell-1},
\]
and Lemma~\ref{lem:c_term_bound} and
\eqref{eq:c_separated_sum} imply that
\[
\norm{c_m(t)} \leq \ee^{t \mu(A_0)} \sum\limits_{\ell=k}^m \binom{m-1}{\ell-1} \frac{(ta)^\ell}{\ell !} \norm{u_0}.
\]
Moreover,
\begin{equation*}
 \begin{aligned}
 \binom{m-1}{r-1} & = \frac{(m-1)(m-2)\cdots (m-r+1)}{(r-1)!}  \leq \frac{ m^r }{(r-1)!}
 \end{aligned}
\end{equation*}
and therefore
\begin{equation}\label{eq:cm_norm_proof}
\norm{c_m(t)} \leq \ee^{t \omega(A_0)} \sum\limits_{r=k}^m \frac{ m^r (ta)^r}{ (r-1)! \, r!} \norm{u_0} \leq
\ee^{t \mu(A_0)} \sum\limits_{r=k}^m \frac{ r^r (Nta)^r}{ (r-1)! \, r!} \norm{u_0}.
\end{equation}
In the second inequality in \eqref{eq:cm_norm_proof}
we use $m=N \frac{m}{N}\le N \lceil \frac{m}{N} \rceil=N k\le Nr$.
Using the inequality $\ee \left( \frac{n}{\ee}  \right)^n \leq n!$, $n\ge 1$, we see that for $k\ge 2$
\begin{equation*}
\begin{aligned}
\norm{c_m(t)} & \le \ee^{t \mu(A_0) } \sum\limits_{r=k}^m \frac{ r^r (Nta)^r}{\ee \left(\frac{r}{\ee} \right)^r(r-1)!}\norm{u_0}
\leq \ee^{t \mu(A_0) }  \sum\limits_{r=k}^\infty \frac{(\ee N t a)^r }{ \ee (r-1)! }\norm{u_0} \\
& = \ee^{t \mu(A_0)-1 }(\ee N t a)^k \sum_{r=0}^\infty \frac{r!}{(r+k-1)!}\frac{(\ee Nta)^r}{r!}\norm{u_0} \\
& \le \frac{\ee^{t (\mu(A_0) + \ee N a)-1}(\ee N t a)^k }{(k-1)!}\norm{u_0},
\end{aligned}
\end{equation*}
where in the last inequality we use
$\frac{r!}{(r+k-1)!} \le \frac{1}{(k-1)!}$.
\end{proof}

 \end{document}